\documentclass[10pt]{amsart}
\usepackage{amscd}
\usepackage{amsmath}
\usepackage{amssymb}
\usepackage{latexsym}
\usepackage{lscape}
\usepackage{xypic}
\usepackage{hyperref} 
\usepackage{color}

\newtheorem{thm}{Theorem}[section]

\newtheorem{lem}[thm]{Lemma}

\newtheorem{lem-def}[thm]{Lemma-Definition}
\newtheorem{cor}[thm]{Corollary}
\theoremstyle{remark}

\newtheorem{rmk}{Remark}[section]
\theoremstyle{definition}
\newtheorem{dfn}{Definition}[section]

\numberwithin{equation}{section}


\newcommand{\quash}[1]{}  
\newcommand{\nc}{\newcommand}
\nc{\on}{\operatorname}

\newcommand{\bQ}{{\mathbb Q}}

\newcommand{\bZ}{{\mathbb Z}}
\newcommand{\calA}{{\mathcal A}}
\newcommand{\calB}{{\mathcal B}}
\newcommand{\calC}{{\mathcal C}}

\newcommand{\calF}{{\mathcal F}}
\newcommand{\calG}{{\mathcal G}}

\newcommand{\calU}{{\mathcal U}}

\newcommand{\calY}{{\mathcal Y}}

\nc{\al}{{\alpha}} \nc{\be}{{\beta}} \nc{\ga}{{\gamma}}
\nc{\ve}{{\varepsilon}} \nc{\Ga}{{\Gamma}} \nc{\la}{{\lambda}}
\nc{\La}{{\Lambda}}

\nc{\ad}{{\on{ad}}}

\nc{\aff}{{\on{aff}}}
\nc{\Aff}{{\mathbf{Aff}}}

\nc{\Bun}{{\on{Bun}}}

\nc{\der}{{\on{der}}}

\nc{\diag}{{\on{diag}}}

\nc{\Fl}{{\calF\ell}}

\nc{\Hol}{{\on{Hol}}}

\nc{\Id}{{\on{Id}}}

\nc{\Ind}{{\on{Ind}}}

\nc{\res}{{\on{res}}}

\nc{\tr}{{\on{tr}}}

\nc{\GSp}{{\on{GSp}}} \nc{\GU}{{\on{GU}}} \nc{\SL}{{\on{SL}}}
\nc{\SU}{{\on{SU}}} \nc{\SO}{{\on{SO}}}

\nc{\four}{{\calF our}}

\topmargin-0.5cm \textheight22cm \oddsidemargin1.2cm
\textwidth14cm












\def\question#1{{}}

\author{An Huang, Shing-Tung Yau}
\address{Department of Mathematics\\ Harvard University\\Cambridge, MA 02138.}

\title{On cohomology theory of (di)graphs}
\date{Aug 29, 2014}

\begin{document}
\maketitle
\begin{abstract}
To a digraph with a choice of certain integral basis, we construct a CW complex, whose integral singular cohomology is canonically isomorphic to the path cohomology of the digraph as introduced in \cite{GLMY}. The homotopy type of the CW complex turns out to be independent of the choice of basis. After a very brief discussion of functoriality, this construction immediately implies some of the expected but perhaps combinatorially subtle properties of the digraph cohomology and homotopy proved very recently \cite{GLMY2}. Furthermore, one gets a very simple expected formula for the cup product of forms on the digraph. On the other hand, we present an approach of using sheaf theory to reformulate (di)graph cohomologies. The investigation of the path cohomology from this framework, leads to a subtle version of Poincare lemma for digraphs, which follows from the construction of the CW complex.
\end{abstract}

\tableofcontents
\baselineskip=16pt plus 1pt minus 1pt
\parskip=\baselineskip

\pagenumbering{arabic}
\addtocounter{page}{0}

\section{Introduction}\label{Intro}

In the past few years, there are rapidly increasing interests of developing geometric concepts in the context of graphs, besides spectral graph theory. See e.g. \cite{K0} for a short exposition. In particular, there exist several attempts to define the homology and cohomology of (di)graphs, e.g. via cliques \cite{BSS}, or via path algebra \cite{GLMY}\cite{DH}. 

Our first purpose of this paper is to try to better understand the path cohomology of digraphs \cite{GLMY}. This is an interesting theory which is expected to play the role of singular cohomology or in some nice cases De Rham cohomology for digraphs. \cite{GLMY} discusses some of its nice but perhaps subtle properties, and furthermore even nicer and expected basic properties regarding homotopy are proved in \cite{GLMY2}, by applying ideas of traditional algebraic topology to digraphs. In this paper, we construct a CW complex from a digraph with a given choice of certain integral basis, whose integral singular cohomology is canonically isomorphic to the path cohomology of digraphs. We will see that this construction gives rise to a functor from the category of digraphs, to a skeleton of the homotopy category of CW complexes, preserving products. Some immediate consequences of the existence of this functor include that the path cohomology of digraphs is homotopy invariant, that the Kunneth formula holds, and that there exists a functorial cup product on the path cohomology that can be lifted to the level of forms, as \cite{GLMY2}\cite{GLMY} proved. Furthermore, one can then define arbitrary higher homotopy groups for a digraph, to be that of the CW complex, where it is also considered in \cite{GLMY2} but in a possibly slightly different way. In addition, we get a very simple formula for the cup product of forms on the digraph, which is actually implicitly contained in \cite{GLMY2}, but here we understand the formula in a more geometric way. We hope these results are the beginning of a systematic investigation of this construction, which we hope to provide a useful bridge between digraph theory and topology. This construction may be thought of as a generalization of associating a simplicial complex to a graph, but is much more subtle, and has better categorial behavior. Intuitively, it may be viewed as, in some sense, a reversed construction to a particular generalized concept of triangulation of a manifold, which we hope to investigate in future works.

As the combinatorial Laplacian is a central object in (di)graph theory, one clear motivation for developing (di)graph cohomology theories is, in particular, to get interesting (di)graph analogues of Laplacian acting on differential forms, as a foundation for later developments. Some known cohomology theories of (di)graphs are similar to the conventional cohomologies for topological spaces, but at the same time also seem to exhibit some different and perhaps puzzling features at first glance. The CW complex can help to understand this issue better, however we also hope to understand it from different points of view. Also one should ask how these different cohomology theories may be related or treatly in a uniform way. Our second purpose of this paper, starts from section \ref{clique},
is to use sheaf theory to study (di)graph cohomology theories, with the hope to treat different theories within a single framework. It turns out that there is a Poincare lemma for the path cohomology of digraphs, which follows from the construction of the CW complex mentioned in the previous paragraph. Our approach here is partly inspired by some recent study of topologies on a graph \cite{K}, and our motivation partly lies on the hope that the sheaf theory idea might eventually lead to a much-hoped cohomological proof of the Riemann-Roch theorem for graphs \cite{BN}.

{\it Acknowledgements.} The authors thank CASTS (Center of Advanced Study in Theoretical Sciences) of National Taiwan University, where most of the work was done during their visit. They also thank Fan Chung, Alexander Grigor'yan and Yong Lin for useful discussions.

\section{From digraph to CW complex}\label{CW}
In this section, we follow notations in \cite{GLMY}, with some modifications that we will mention. Let $G$ be a finite digraph. By a primitive allowed $k$-path, we mean an ordered sequence of vertexes $i_oi_1...i_k$, such that $i_si_{s+1}$ is a directed edge in $G$, for $s=0,1,...,k-1$. We say this primitive allowed path is regular, iff all these vertexes are different from each other. Note that this regularity condition is more restrictive than the one used in \cite{GLMY}. There are several reasons we prefer this regularity condition: e.g. with this new condition, the homology groups are now obviously bounded above, and Lefschetz fixed point theorem holds, \footnote{We will explain this briefly in section \ref{L}.} both of which are not true with the old regularity condition. On the other hand, we will make a try to relax our regularity condition at the end of this section, to extend the applicability of our construction. We Let $A_k(G)$ denote the space of regular allowed $k$-paths, which by definition, is the free $\bZ$-module generated by all regular primitive $k$-paths, and let $\Omega_k(G)$ denote the submodule of $\partial$-invariant regular allowed $k$-paths defined recursively, as in \cite{GLMY}: recall this means the subspace of $A_k(G)$ consisting of elements whose boundary is an element of $A_{k-1}(G)$. We also use $\Omega(G)$ to denote the direct sum of $\Omega_k(G)$ for all $k$. When no confusions arise, we omit $G$ and write $A_k$ and $\Omega_k$. We call $k$ the length of the path. Note that $A_k(G)=0$ when $k\geq |G|$.

For any $P=\sum_{k=1}^mc_kp_k\in \Omega_k(G)$, where $p_k,k=1,2,...,m$ are primitive regular allowed paths, we define $w(P)=\sum_{k=1}^m|c_k|$ to be the width of the path $P$. For each $p_k$, we define its support to be the subgraph it defines, namely, the minimal subgraph of $G$, such that $p_k$ is an allowed path in the subgraph. We define the support of $P$ to be the union of the support of each $p_k$ where $c_k$ is nonzero, and denote by $Supp(P)$. We say $P$ is minimal, iff there do not exist integers $d_k,k=1,2,...,m$, such that $|c_k-d_k|\leq |c_k|$ and $|d_k|\leq |c_k|$ for each $k=1,2,...,m$, and $P'=\sum_{k=1}^md_kp_k\in \Omega_k$, and $w(P')<w(P)$. In this definition, if such a $P'$ exists, we say that $P'$ is strictly smaller than $P$. Note that $Supp(P')\subset Supp(P)$, and we have also $P-P'\in\Omega_k$ is strictly smaller than $P$. Therefore, it is clear that, any element in $\Omega_k(G)$ is a linear combination of minimal elements. \quash{ We let $M_k(G)$ denote the set of minimal elements in $\Omega_k(G)$, and let $M(G)$ denote the union of $M_k(G)$ for all $k$. Furthermore, since $\Omega_k$ is a submodule of the free $\bZ$ module $A_k$, it is itself a free $\bZ$-module. We have the following
\begin{lem}
There exists a basis of this free $\bZ$-module $\Omega_k$ extending scalars from $\bZ$ to $\bQ$, consisting of minimal elements.
\end{lem}
\begin{proof}
Choose an arbitrary basis, then it follows from the above discussion that, any basis element can be represented as a linear combination of minimal elements. Among the minimal elements, form a maximal subset such that elements in the subset are $\bZ$-linearly independent, then we get a basis as desired.
\end{proof}}
\quash{
Given a digraph $G$ with a fixed choice of integral basis of $\Omega(G)$ consisting of minimal elements (whose existence we are going to prove), we will construct a CW complex $T_G$. We will prove further that the homotopy type of $T_G$ is independent of this choice of integral basis, and furthermore this construction is functorial in an appropriate desired sense, and preserves products. 
}
\quash{
\begin{lem}\label{fundamental}
Let$p_1$ and $p_2$ be two different primitive paths of the same length, that both show up in a minimal path, then the product of their signs equals $(-1)^D$, where $D$ is the number of different places of $p_1$, $p_2$.
\end{lem}
\begin{proof}
By induction. Informally, this lemma says that if the boundaries of any two primitive paths can possibly have any cancellation though possibly other paths, then the above relation holds.
\end{proof}
}

\begin{lem} \label{SE}
Any minimal path is a linear combination of primitive paths with the same starting and ending vertexes.
\end{lem}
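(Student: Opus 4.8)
The plan is to decompose a minimal path according to the starting vertex of its primitive summands, show that each piece of the decomposition is still $\partial$-invariant, and then invoke minimality to force the decomposition to be trivial; the claim about the ending vertex then follows by the mirror argument (or by passing to the opposite digraph).

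So write a minimal element $P=\sum_k c_k p_k\in\Omega_\ell(G)$ with all $c_k\ne 0$ and the $p_k$ distinct primitive regular allowed $\ell$-paths. The first step is an elementary remark about the boundary operator: if $p=i_0i_1\cdots i_\ell$ is a primitive allowed path, then deleting the first vertex $i_0$ or the last vertex $i_\ell$ yields a path all of whose consecutive pairs were already edges of $G$, hence an allowed (and still regular) path; only the terms of $\partial p$ obtained by deleting an interior vertex $i_j$, $1\le j\le\ell-1$, can fail to be allowed, since only these create a new pair $i_{j-1}i_{j+1}$. In particular, every non-allowed primitive path occurring in $\partial p$ has the same starting vertex $i_0$ and the same ending vertex $i_\ell$ as $p$.

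The second step uses this to split $P$. Group the summands by starting vertex, $P=\sum_v P_v$, where $P_v$ is the sum of those $c_k p_k$ with $p_k$ starting at $v$. Fix a non-allowed regular $(\ell-1)$-path $q$, say starting at a vertex $u$. By the remark, $q$ can only be produced by an interior deletion, and such a boundary term inherits the starting vertex of the path it comes from; hence the coefficient of $q$ in $\partial P=\sum_v\partial P_v$ equals its coefficient in $\partial P_u$ alone. Since $P\in\Omega_\ell(G)$ exactly means $\partial P$ has no non-allowed component, we deduce that $\partial P_v$ has no non-allowed component for every $v$, i.e.\ $P_v\in\Omega_\ell(G)$ for all $v$. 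Now if two distinct $P_{v_0},P_{v_1}$ were nonzero, then $P'=P_{v_0}$ would be a nonzero element of $\Omega_\ell(G)$ whose coefficients $d_k$ (each equal to $c_k$ or $0$) satisfy $|d_k|\le|c_k|$ and $|c_k-d_k|\le|c_k|$, with $0<w(P')<w(P)$ and $P-P'=\sum_{v\ne v_0}P_v$ again a nonzero element of $\Omega_\ell(G)$ of smaller width — contradicting the minimality of $P$. Hence all $p_k$ share a single starting vertex. Repeating the argument for the ending vertex (deleting an interior vertex fixes $i_\ell$, while deleting $i_\ell$ itself stays allowed) shows that all $p_k$ share a single ending vertex, which is the assertion.

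The one genuinely delicate point is the bookkeeping in the second step: one must be certain that a non-allowed $(\ell-1)$-path appearing with nonzero coefficient in some $\partial P_v$ cannot be cancelled by a contribution coming from a different $P_{v'}$. This is exactly what the remark of the first step secures — the potentially non-allowed boundary terms of a path stay anchored at its starting (resp.\ ending) vertex, so the $\partial P_v$ have pairwise disjoint non-allowed parts. Checking the minimality inequalities themselves is immediate since each $d_k$ is $0$ or $c_k$; the only care needed is that $P'$ and $P-P'$ are both nonzero, which holds because $v_0$ and $v_1$ were chosen among the (at least two) indices with $P_v\ne 0$.
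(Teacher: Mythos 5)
Your proof is correct and follows essentially the same route as the paper's: the paper's (much terser) argument likewise rests on the fact that cancellation of non-allowed boundary components cannot occur between primitive paths with different starting (resp.\ ending) vertexes, so splitting a minimal $P$ by starting vertex yields strictly smaller $\partial$-invariant pieces, contradicting minimality. Your write-up merely makes the key anchoring observation explicit (only interior deletions can create non-allowed terms, and these keep both endpoints) along with the width bookkeeping and the nonvanishing of $P'$ and $P-P'$, which sharpens but does not change the method.
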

\begin{proof}
Given any two primitive paths with different starting vertexes, that both show up in a $\partial$-invariant path, if some of their boundary components cancel possibly through a string of other primitive paths, at certain step one has to change the starting vertex, therefore the cancellation is not useful in eliminating non $\partial$-invariant paths, in the sense that there exists a strictly smaller $\partial$-invariant path consisting of primitive paths all starting with the same vertex. The same argument applies to the ending vertex.
\end{proof}

Now we are going to construct cells from minimal paths, and a CW complex given a choice of integral basis of $\Omega(G)$ consisting of minimal paths, whose existence is a corollary of lemma \ref{1} below, which we will prove together with lemma \ref{min} simultaneously by induction.
\begin{lem}\label{min}
Any minimal path $P$, is a linear combination of primitive paths, with coefficients being either $1$ or $-1$.
\end{lem}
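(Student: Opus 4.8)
The plan is to prove Lemma \ref{min} by strong induction on the length $k$ of the path, running the induction simultaneously with Lemma \ref{1} (as the authors indicate), and to extract the sharper coefficient information from the structure of $\partial$-invariance. Let $P=\sum_{j=1}^m c_j p_j\in\Omega_k(G)$ be minimal, with the $p_j$ distinct primitive regular $k$-paths and all $c_j\neq 0$. By Lemma \ref{SE} we may assume all $p_j$ share a common starting vertex $i_0$ and a common ending vertex $i_k$. The key point is to analyze what minimality forces on the coefficients when we compute $\partial P=\sum_j c_j\,\partial p_j$: each $\partial p_j$ is an alternating sum of the $(k-1)$-faces obtained by deleting one vertex, and $\partial P$ must lie in $A_{k-1}(G)$, i.e. every primitive $(k-1)$-path that is \emph{not} allowed in $G$ must have total coefficient zero in $\partial P$.

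The heart of the argument is a pigeonhole/cancellation analysis on these ``forbidden'' faces. First I would isolate a vertex position $s$ (say an interior one) at which the paths $p_j$ differ, and look at the face $\hat{p}_j := i_0\cdots \widehat{i_s}\cdots i_k$ obtained by omitting that vertex; group the $p_j$ according to the value of this face. If a face $q = i_0\cdots \widehat{i_s}\cdots i_k$ is not an allowed path of $G$ (this happens precisely when $i_{s-1}i_{s+1}$ is not an edge), then the signed sum of $c_j$ over all $p_j$ deleting to $q$ at position $s$ must vanish. Suppose some coefficient, say $c_1$, had $|c_1|\geq 2$. The strategy is to produce a strictly smaller $\partial$-invariant path and contradict minimality: one shows that the ``$\partial$-invariance obstructions'' involving $p_1$ can be cancelled using a sub-collection of the other $p_j$ with coefficients of absolute value $1$, so that subtracting an appropriate integral combination $P' = \sum d_j p_j$ with $|d_j|\le |c_j|$, $|c_j - d_j|\le |c_j|$ and $\sum|d_j| < \sum|c_j|$ remains in $\Omega_k$. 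Concretely, if $|c_1|\geq 2$, I would try to show $2p_1$ (or the appropriate ``doubled'' part) is itself, after correction by $\pm1$-combinations of the remaining paths, forced to be $\partial$-invariant on its own, so one can peel off a copy of a smaller $\partial$-invariant path supported within $\mathrm{Supp}(P)$; this uses the inductive hypothesis applied in lower length to control the faces, together with Lemma \ref{SE} to keep the endpoints aligned. The companion statement Lemma \ref{1} (whatever its precise content about the cells/basis) feeds in here to guarantee that the combinatorial patterns of faces that can appear are exactly the controlled ones, closing the induction loop.

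In more detail, the inductive step would proceed as follows. For $k=1$: a minimal element of $\Omega_1$ is supported on edges and $\partial$-invariance is automatic, so minimality with $w(P')<w(P)$ forbidden forces each $|c_j|=1$ — a base case that is essentially immediate. For the inductive step, assume the statement for all lengths $<k$. Given minimal $P\in\Omega_k$, consider $\partial P \in A_{k-1}$. Decompose $\partial P$ along each deletion position and, at each position, separate the allowed faces from the forbidden faces. The forbidden-face equations are homogeneous linear relations among the $c_j$ with coefficients $\pm1$; I would argue that the solution set, intersected with the minimality constraint, consists only of $\pm1$ vectors — the mechanism being that any entry of absolute value $\ge 2$ lets you split $P = P' + P''$ with both summands in $\Omega_k$ (here one checks $P'$ is $\partial$-invariant because its boundary's forbidden-face coefficients are sub-sums of those of $\partial P$, arranged to vanish) and $w(P')+w(P'')=w(P)$ with both positive, contradicting minimality. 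The delicate bookkeeping is ensuring that such a split can always be made \emph{integrally} and compatibly across \emph{all} deletion positions at once.

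\textbf{Main obstacle.} The hard part will be exactly this last point: showing that a coefficient of absolute value $\geq 2$ can be ``split off'' in a way that simultaneously respects the $\partial$-invariance conditions coming from \emph{every} vertex-deletion position, not just one. The forbidden-face relations at different positions are coupled, and a naive splitting that fixes one position may break another. I expect the resolution to require the simultaneous induction with Lemma \ref{1}: the cell-construction presumably provides a normal form for minimal paths (each corresponding to a single cell whose boundary faces are enumerated combinatorially), and that normal form is what ultimately rules out higher coefficients. So the real content is less a direct computation than correctly setting up the joint induction so that Lemma \ref{1}'s structural description of minimal paths is available precisely when needed to control the coupled relations. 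A secondary technical nuisance is handling the regularity (all vertices distinct) condition carefully, since deleting a vertex from a regular path always yields a regular — though possibly forbidden — path, which is what makes the face analysis clean; I would flag but not belabor this.
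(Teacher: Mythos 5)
There is a genuine gap, and it sits exactly where you flag it. Your entire inductive step rests on the claim that if some coefficient satisfies $|c_j|\geq 2$ then $P$ can be split integrally as $P=P'+P''$ with both pieces $\partial$-invariant, $w(P')+w(P'')=w(P)$, and both pieces nonzero; but you give no mechanism for producing such a split compatibly with the cancellation constraints at \emph{all} vertex-deletion positions at once, and that simultaneous compatibility is precisely the content of the lemma, not a bookkeeping nuisance. The forbidden-face equations are indeed homogeneous $\pm1$-relations in the $c_j$, but nothing in your argument shows their solution lattice has the unimodularity-type property you need (that minimal integral solutions are $\pm1$-vectors); asserting that ``any entry of absolute value $\ge 2$ lets you split'' is a restatement of the goal. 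Your appeal to Lemma \ref{1} to close the loop also does not match the logical structure of the joint induction: at the point where Lemma \ref{min} is being proved for length $k$, Lemma \ref{1} is available only for lengths $<k$, and in the paper Lemma \ref{1} at length $k$ is itself deduced \emph{after} (and from) the length-$k$ cell construction, so it cannot be what rescues the splitting argument. Tellingly, the authors themselves indicate (in the proof of the Poincar\'e lemma) that they do not know a purely combinatorial proof of the relevant statements, so the combinatorial route you sketch is not a gap one can wave away.

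For comparison, the paper's actual argument is geometric rather than combinatorial: assuming the statements up to length $k-1$, each minimal $(k-1)$-path in the decomposition of $\partial P$ already has an associated cell, and the union of these cells (with multiplicity) is a closed manifold since $\partial\partial P=0$; a height function built from the position of vertices between the common starting and ending vertexes (this is where Lemma \ref{SE} enters), perturbed to a Morse function, is shown to have critical points only at $S$ and $E$, forcing each component to be a $(k-1)$-sphere; minimality then forces $\partial P$ to correspond to a single sphere, and the coefficient statement of Lemma \ref{min} (together with uniqueness of the decomposition of $\partial P$) is read off from that geometric picture. If you want to salvage your approach, you would need to either supply an honest combinatorial proof of the splitting step (which would in fact be new content relative to the paper) or import the paper's sphere argument, at which point your face-cancellation analysis becomes redundant.
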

\quash{
\begin{proof}
If some primitive path $p$ occurs more than once in $P$, then there exists at least one component of the boundary of $p_0$, that is used in cancellations more than once, with two other distinct primitive paths $p_1$ and $p_2$. Then the product of signs of $p_1$ and $p_2$ equals $1$, but at the same time they differ by only one place, thus contradicting lemma \ref{fundamental}.
\end{proof}
}

\begin{lem}\label{1}
Any minimal integral relation among minimal paths of a fixed length, is of the form $\sum_{i=1}^m \lambda_iP_i=0$, where all the coefficients $\lambda_i$ are either 1 or -1. Here the definition of minimal integral relations are the same as that in the definition of minimal paths-- in an obvious sense that it can not be written as a sum of two strictly smaller relations.
\end{lem}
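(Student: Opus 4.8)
The plan is to prove Lemma~\ref{1} as the outermost layer of a single induction on the path length $k$ that simultaneously yields Lemma~\ref{min}: at each length $k$ one first settles Lemma~\ref{min} and then feeds it into Lemma~\ref{1}. I would work throughout with the presentation of $\Omega_k(G)$ as the kernel of the map $\partial^{\mathrm f}\colon A_k(G)\to B_{k-1}(G)$ got by composing $\partial$ with the projection away from $A_{k-1}(G)$, where $B_{k-1}(G)$ is the free $\bZ$-module on the \emph{forbidden} regular $(k-1)$-paths (sequences of distinct vertices that fail to be allowed): thus $P\in\Omega_k$ precisely when, for every forbidden $(k-1)$-path $f$, the signed number of primitive $k$-paths of $P$ having $f$ as a codimension-one face is zero. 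Two facts at length $k$ that also go into Lemma~\ref{min} are needed. \textbf{(i)} A minimal path of width $\ge2$ contains no primitive sub-path that is itself $\partial$-invariant, since otherwise pushing that sub-path's coefficient toward $\pm1$ (or deleting it) gives a strictly smaller element of $\Omega_k$; hence every primitive path occurring in such a $P$ has a forbidden face. \textbf{(ii)} (sign-parity) if primitive paths $p,p'$ both occur in a minimal path $P$, their coefficients --- which are $\pm1$ by Lemma~\ref{min} --- have product $(-1)^{D(p,p')}$, $D(p,p')$ being the number of coordinates at which $p$ and $p'$ differ. For (ii) one uses Lemma~\ref{SE} (so $p,p'$ agree at the endpoints) and traces ``cancellation chains'': when only two primitives of $P$ share a forbidden face the face-cancellation equation fixes their coefficient ratio in terms of the two insertion positions, which unwinds to the stated power of $-1$, and the general case is reduced to this by peeling off minimal sub-combinations.

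Granting (i)--(ii), the argument for Lemma~\ref{min} at length $k$ is: if $P=\sum_l c_l p_l$ is minimal with $|c_{l_0}|\ge2$, a forbidden face $f$ of $p_{l_0}$ must be shared with other primitives of $P$ (its component in $\partial^{\mathrm f}P$ vanishes), and following the forced cancellation one reaches either two \emph{distinct} primitives $p_1,p_2$ of $P$ whose coefficients are forced to violate (ii), or a strictly smaller $\partial$-invariant sub-combination, contradicting minimality. For Lemma~\ref{1} the plan is to repeat this dichotomy ``one level up'', with the minimal paths $P_i$ in the role of primitive objects and ``$P_i$ and $P_j$ share a primitive path'' in the role of ``two primitives share a forbidden face''. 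Given a minimal relation $\sum_{i=1}^m\lambda_iP_i=0$ with distinct $P_i$, write each $P_i=\sum_l c_{il}p_{il}$ with $c_{il}=\pm1$ (by Lemma~\ref{min}, now in hand at length $k$); then for every primitive path $p$ one has $\sum_{i,l:\,p_{il}=p}\lambda_i c_{il}=0$. This family of per-primitive cancellations organizes the $P_i$ into cancellation chains through shared primitives, and ``minimal relation'' means exactly that the index set admits no splitting into two parts each carrying a nontrivial relation of smaller total width $\sum_i|\lambda_i|$. If some $|\lambda_{i_0}|\ge2$, pick a primitive path $p\in\mathrm{Supp}(P_{i_0})$; when $P_{i_0}$ has width $\ge2$, fact (i) supplies a forbidden face $f$ of $p$, and combining the vanishing of the $f$-component of $\partial^{\mathrm f}\!\bigl(\sum_i\lambda_iP_i\bigr)=0$ with the per-primitive conditions pins the other $P_i$ meeting $p$ into a rigid local pattern (the degenerate case where $P_{i_0}$ is a single $\partial$-invariant primitive is dispatched directly from the per-primitive equation at $p$). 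Tracing this pattern yields either two distinct $P_i$ sharing a primitive in a way that violates (ii), or a proper sub-collection of the $P_i$ that already satisfies a nontrivial relation, hence a splitting of $\sum_i\lambda_iP_i=0$ into two strictly smaller relations, against minimality. So $|\lambda_i|\le1$ for all $i$, which is the assertion; and since minimal paths span $\Omega_k(G)$ while all their minimal relations are $\pm1$-relations, the existence of an integral basis of $\Omega_k(G)$ built from minimal paths follows at once.

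The step I expect to be the real obstacle is the rigidity analysis for Lemma~\ref{1}. Inside a single path the incidence between primitives and forbidden faces is essentially graph-like --- one forbidden face links primitives differing in a single coordinate run --- whereas two minimal paths can overlap in several primitives simultaneously, so the chain-chasing is looser and one must prove that \emph{any} configuration supporting a coefficient $|\lambda_{i_0}|\ge2$ contains a sub-configuration that is itself a relation (equivalently, that the lattice of relations among minimal paths is spanned by $\pm1$-relations). This is where the minimality hypothesis together with the common-endpoint constraint of Lemma~\ref{SE} must be used carefully, and where the order of the simultaneous induction matters: both (i) and the parity fact (ii) at length $k$ have to be available before Lemma~\ref{1} at length $k$ can be closed.
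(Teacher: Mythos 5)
Your plan is a purely combinatorial one, and as written it is a plan rather than a proof: the two inputs you isolate, fact (i) and especially the sign-parity fact (ii), together with the final ``rigidity analysis'', are asserted but not established, and they are exactly the content of Lemma~\ref{1}. For (ii), the cancellation equation at a forbidden face $f$ only fixes the ratio of two coefficients when \emph{exactly two} primitives of $P$ are incident to $f$; if four or more primitives of a minimal $P$ share $f$ (which the definition of minimality does not visibly exclude), the equation constrains only a signed sum, and your reduction ``by peeling off minimal sub-combinations'' is not an argument --- the sub-combination that cancels $f$ need not be $\partial$-invariant, so minimality of $P$ cannot be applied to it. Moreover, (ii) as stated compares two primitives \emph{inside one} minimal path, whereas in the proof of Lemma~\ref{1} you need to constrain the coefficients $\lambda_i$ attached to \emph{different} minimal paths $P_i$ through shared primitives; the step ``tracing this pattern yields either two distinct $P_i$ violating (ii) or a splitting into two strictly smaller relations'' is precisely the hard dichotomy, and you yourself flag it as the unresolved obstacle. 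So there is a genuine gap, and it sits at the heart of the statement rather than in routine bookkeeping.

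For comparison, the paper does not argue combinatorially at all: Lemma~\ref{1} is proved inside the inductive construction of the CW complex, interleaved with Lemma~\ref{min}. Given a minimal integral relation among minimal $k$-paths, the $k$-cells already attached to these paths (using the chosen basis in lengths $\le k-1$) glue, counting multiplicity, into a closed manifold because the boundary cells cancel; the height function built from the position of each vertex in the primitive components (well defined since each path has a single starting and ending vertex by Lemma~\ref{SE}) is perturbed to a Morse function whose only possible critical points are those two vertices, so each component is a $k$-sphere, and the top cells of a sphere occur with multiplicity $\pm 1$, which is the assertion. The paper is explicit that the combinatorics here is subtle and that the authors know no combinatorial proof of the closely related Poincar\'e-type statements; indeed a statement identical to your fact (ii) appears in the source only as a commented-out ``fundamental lemma'' with no proof, evidently abandoned in favor of the Morse-theoretic route. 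A completed version of your approach would therefore be genuinely new and of independent interest, but to count as a proof you would have to supply (ii) in full generality (faces shared by more than two primitives), its cross-path analogue for relations among the $P_i$, and the splitting argument, none of which are yet in place.
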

\quash{
\begin{proof}
We follow the following procedure to write $P$ as such an integral linear combination: at every step, we first identify the highest component of the path, then find a lowest order basis element with same highest component ( note that this is always possible as the proofs of previous lemmas show) and subtract it from the path. It is obvious that this procedure has to end after finitely many steps, and we are left with the desired linear combination. By lemma \ref{fundamental}, any pair of primitive paths that both appear in any single minimal path has to appear with the same sign configuration in any other minimal path. (the product of their signs stays the same) Therefore, suppose $p$ is the maximal order primitive component of $P$ that appears in the linear combination more than twice, then there has to exist more than one $b_i$ in the representation with nonzero coefficient, that has highest component higher than $p$, (contributing $p$ with the same signs in the expression) and at least one $b_j$ with nonzero coefficient, that has highest component equal to $p$. Then upon subtracting one such $b_j$ from one such $b_i$ eliminating $p$, and decompose the result as a sum of minimal elements, and then write the minimal element involving the maximal order component as sums of basis elements (whose orders are lower), one gets another basis element with strictly lower order, while having the same maximal component as that of the $b_i$ one has chosen, contradicting the procedure.
}

Note that lemma \ref{1} implies that any rational basis of $\Omega(G)$ consisting of minimal paths is an integral basis, thus it implies the following. 

\begin{cor} \label{basis}
There exists an integral basis of $\Omega(G)$ consisting of minimal paths.
\end{cor}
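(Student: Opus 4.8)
The plan is to first extract a $\bQ$-basis of $\Omega(G)\otimes\bQ$ out of the set of all minimal paths, and then to show that \emph{any} such $\bQ$-basis is automatically an integral basis of $\Omega(G)$; Lemma \ref{1} is exactly what makes the second step work. Since minimal paths are homogeneous for the length grading and $\Omega(G)=\bigoplus_k\Omega_k(G)$, it suffices to work inside a fixed $\Omega_k(G)$. By the discussion preceding Lemma \ref{SE}, every element of $\Omega_k(G)$ is a $\bZ$-linear combination of minimal paths, so the minimal paths of length $k$ generate $\Omega_k(G)$ as a $\bZ$-module, and in particular span $\Omega_k(G)\otimes\bQ$ over $\bQ$. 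Choosing a maximal $\bQ$-linearly independent subset $\mathcal B=\{P_1,\dots,P_r\}$ among them, $\mathcal B$ is then a $\bQ$-basis of $\Omega_k(G)\otimes\bQ$ with $r=\operatorname{rk}_\bZ\Omega_k(G)$.

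It remains to check that $\mathcal B$ generates $\Omega_k(G)$ over $\bZ$, and since the minimal paths already $\bZ$-span $\Omega_k(G)$, it is enough to prove that every minimal path $Q$ of length $k$ lies in $\bZ\langle P_1,\dots,P_r\rangle$. Write $Q=\sum_i\mu_iP_i$ with $\mu_i\in\bQ$ (possible, as $\mathcal B$ is a $\bQ$-basis), let $b\ge 1$ be the least common denominator of the $\mu_i$, and put $a_i=b\mu_i\in\bZ$, so that
\[
bQ-\sum_{i=1}^r a_iP_i=0
\]
is an integral relation among minimal paths of length $k$ with $\gcd(b,a_1,\dots,a_r)=1$. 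If $b=1$ we are done, so the whole statement reduces to ruling out $b>1$.

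To do this I would run the greedy normal-form procedure hinted at in the elided arguments, relative to the basis $\mathcal B$. Fix a total order on primitive $k$-paths and, for $0\neq P\in\Omega_k(G)$, let $h(P)$ be its highest primitive component; by Lemma \ref{min} the coefficient of $h(Q)$ in a minimal path $Q$ equals $\pm1$. Decomposing the displayed relation into a sum of minimal integral relations (possible by the same width induction used for paths: a relation is either minimal or a sum of two strictly smaller relations, and one recurses) and applying Lemma \ref{1}, every piece has all coefficients in $\{1,-1\}$; combined with the fact that $\mathcal B$ is a $\bQ$-basis, this forces $h(Q)$ to occur, with the same $\pm1$ coefficient, in some $P_i\in\mathcal B$. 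Subtracting $\pm P_i$ from $Q$ yields an element of $\Omega_k(G)$ with strictly smaller highest component, and inducting on $h$ expresses $Q$ as a $\pm1$-combination of elements of $\mathcal B$; in particular $b=1$.

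The main obstacle is precisely this last propagation step: one must verify that the $\pm1$ constraint of Lemma \ref{1} on \emph{minimal} relations genuinely prevents any denominator $b>1$ from appearing, i.e.\ that every subtraction in the greedy process can be carried out with an integer ($\pm1$) coefficient. The rest is the routine linear algebra of extracting a $\bZ$-basis from a generating set of a finitely generated free $\bZ$-module, together with the bookkeeping needed to track the coefficient of $Q$ through the decomposition into minimal relations.
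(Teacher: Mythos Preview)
Your approach is exactly the paper's: the paper simply asserts that Lemma~\ref{1} implies any rational basis of $\Omega(G)$ consisting of minimal paths is already an integral basis, and you are filling in that implication. One simplification: the greedy highest-component induction is unnecessary, since once you decompose $bQ-\sum a_iP_i=0$ into minimal relations (each involving only $Q$ and members of $\mathcal B$, by the ``strictly smaller'' definition), the $\bQ$-independence of $\mathcal B$ forces every piece to contain $Q$ with coefficient $\pm1$, and any single such piece already reads $Q=\pm P_{i_1}\pm\cdots\pm P_{i_s}\in\bZ\langle\mathcal B\rangle$.
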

\quash{
\begin{proof}
Fix any length $L$, we construct an integral basis of minimal paths with length $L$. Choose an arbitrary ordering on the set of primitive paths of length $L$, then we have an induced total ordering of the set of minimal paths of the same length, by first comparing the maximal component, then the next to maximal component, and so on. (note that, for the purpose of ordering, $P$ and $-P$ are regarded as the same, and also this defines a total ordering on the set of minimal elements of length $L$ up to sign, because of the definition of minimal elements.)

Next, we pick the lowest minimal path as the first basis element $b_1$, and the next to lowest minimal path as the second basis element $b_2$. For the third lowest element $P_3$, if it does not have linear relations with $b_1$ and $b_2$, then we include it as the third basis element. Otherwise, $b_2$ (or $-b_2$) and $P_3$ must share the maximal component, and therefore by lemma \ref{min}, the maximal component of one of $P_3-b_2$ and $P_3+b_2$ is lower than that of $b_2$, so it decomposes into a sum of minimal paths lower than $b_2$, which implies that $P_3$ must be a integral linear combination of $b_1$ and $b_2$, so we discard $P_3$ and go to the fourth lowest element $P_4$.

Now suppose we have analyzed the lowest $k$ minimal paths in this way, and some of them become basis elements, and we have now a partial basis $b_1$,...,$b_s$. Furthermore, for any minimal path whose order is within the lowest $k$, it is either one of these partial basis elements, or it is an integral linear combination of these partial basis elements whose order is below it. If $P_{k+1}$ is linearly independent with these partial basis elements, then we declare it to be the new basis element $b_{s+1}$, otherwise, for the same reasoning as that for $P_3$, $P_{k+1}$ is an integral linear combination of $b_1$,...,$b_s$. Lastly, lemma \ref{min} implies that the set of minimal paths is finite, therefore induction ends, and we end up with an integral basis.
\end{proof}
}

For path length $k=1$, both lemmas are obviously true, and furthermore one associates a $k$-cell to any minimal path of length $k$, by filling in a $(k-1)$-sphere, corresponding to the union of cells associated with boundary components of the path: meaning that the boundary of the path decomposes uniquely as a sum of smaller minimal paths of length $k-1$, each of which we have associated a cell, and the cell association commutes with the boundary operation. Now suppose all of these statements are true for path length up to $k-1$, and take $P$ to be a minimal path of length $k$. As $\partial P$ is a path, it can be decomposed into a sum of minimal paths of length $k-1$, where all the paths are smaller than or equal to $\partial P$, for which there are associated $k-1$ cells. The union of these cells, counting multiplicity, is a sum of closed manifolds, as $\partial\partial P=0$. Note: the reader can convince himself/herself that, each minimal path of length $k-2$ that shows up as a boundary component of a boundary component of $P$ appears even times as expected, and in particular there are no singularities on these manifolds. We construct a height function on it as follows:  By lemma \ref{SE}, the starting and ending vertexes of any minimal path are unique. First of all, there is a height function on edges, given by piecewise-linearly extending the integer valued length function defined on vertexes, given by the position it sits in a primitive path component-- note that this position number is the same for any primitive path one chooses, as a consequence of the obvious fact that any primitive path of maximal length in the support of a minimal path, must be a component of the minimal path. We proceed by extending the height function to disks and so on, as we can always extend the height function from a sphere to the ball it bounds. Take any of these closed manifold and call it $M$, we can make a small perturbation to make the height function become a Morse function on $M$. We single out a subset $E_1$ of the set of vertexes in the support of $P$, consisting of vertexes such that any path in the support of $P$ connecting the vertex to the ending vertex $E$ is of length 1. We define another subset $S_1$ in the symmetric way, with respect to the starting vertex. It is clear that the only possible critical points of this height function are the starting and ending vertexes, and vertexes in $E_1$ and $S_1$, as aside from them, there is always a direction in which the function is strictly monotonic. Now suppose a vertex $W$ in $E_1$ is a critical point, then $E$ can not lie on $M$. Take any primitive path component $p$ of $P$, whose support after truncating $E$ is in $M$, that goes through $W$, \footnote{Such a primitive path has to exist in the situation.}and let us write it as $p=SqWE$. Then $qW$ as a boundary component of $SqW$, has to be cancelled by a boundary component of a primitive path in a minimal path associated with $M$, which all are consisting of primitive paths of length $k-1$ that does not go through $E$. So the only such possible primitive path that has a boundary component cancelling it is itself with a different orientation, which is a contradiction. For the same reason, vertexes in $S_1$ can not actually be critical points. Therefore, the only possible critical points are $S$ and $E$, which implies that $M$ is a $(k-1)$-sphere, and $S$, $E$ are in its support. Now take all length $k$ primitive paths in the support of $M$, defined by the union of the support of $k-1$ minimal paths associated with $M$, with orientation determined by orientations of boundary components, we get a $\partial$-invariant $k$-path that is smaller than or equal to $P$: recall that any maximal length primitive path in the support of $P$ is a primitive component of $P$. Also note that for any primitive path of length $k$, all of its boundary components must have support in a single sphere, as otherwise there have to exist boundary components that does not belong to any of the spheres, which is impossible. On the other hand, any primitive $(k-1)$-path associated with $M$ must be a boundary component of a primitive $k$-path with support in $M$, as a consequence of the fact that any longest primitive path in the support of $M$ must have length $k$. So the path we just constructed has to be equal to $P$ as $P$ is minimal, and therefore $\partial P$ corresponds to a single $(k-1)$-sphere. This implies lemma \ref{min} for $P$, and that the decomposition of $\partial P$ in to a sum of minimal elements is unique. On the other hand, we can fill in the sphere to get a $k$-cell for $P$. This cell association clearly commutes with the operation of taking boundary, by construction.

Let us choose an integral basis for each $\Omega_j(G)$ consisting of minimal paths, for $j$ up to $k-1$, where lemma \ref{1} is true by inductive hypothesis. We now construct a $(k-1)$-skeleton together with some $k$-cells that we will later use in the induction, from $G$ with our choice of basis of $\Omega(G)$ up to length $k-1$. For this purpose, we need to possibly exclude cells associated with minimal paths that are not elements of the basis chosen, and some cells may need to be modified accordingly.

Again, for any vertex, one associates a zero-cell. For any edge, one associates a one-cell with boundary given by the boundary of the edge. 

Suppose again for all minimal paths in our basis of length up to $i-1$, ($i\leq k$) one has associated cells of the corresponding dimension, by filling in a sphere that is associated with the boundary of the path, so one has a CW complex with cell dimensions up to $i-1$. Now pick any designated minimal path $P$ of length $i$, it must have a single starting vertex S and a single ending vertex E by lemma \ref{SE}. Again minimality of $P$ and lemma \ref{min} implies that the boundary of $P$ can be decomposed uniquely into an integral linear combination of minimal paths, with all coefficients being 1 or -1. For any minimal path $P'$ of length $i-1$ that shows up in the linear combination, if it is in our chosen basis, we have already assigned a $(i-1)$-cell to it. Otherwise, it is a unique integral linear combination of basis elements, with coefficients being 1 or -1 by lemma \ref{1}.

If we union the cells in the previous paragraph associated with basis elements in the linear combination, one gets a manifold with  boundary being a $(i-2)$-sphere specified by the union of all $(i-2)$-dimensional cells associated with $\partial P'$. \footnote{Note that a choice of basis is important for this to be true.}For any such manifold, we can again construct a Morse height function by gluing together individual such functions on cells associated with each minimal element-- note that the height function is constructed in a way that enables one to glue. Then the same argument shows that it is a $(i-1)$-disk: e.g, one can attach another $(i-1)$-cell with the  $(i-2)$-sphere, to get a manifold without boundary, and then uses the same Morse theory argument.

Therefore, for each minimal path that shows up in the above decomposition of the boundary of $P$, there corresponds a piece of the already existing CW complex homeomorphic to a $(i-1)$-disk, with boundary as we described. So again the boundary of these disks cancel, and therefore the union of all of them is again a manifold of dimension $i-1$. Note: one has to show further that each $(i-1)$-cell associated with basis elements that shows up has multiplicity 1 (or -1), when taking all the $P'$ into account. This can be done by first restricting our attention to all the $P'$ that share the same starting and ending vertexes. Each of these $P'$ corresponds to a previously constructed $(i-1)$-cell. One sees that the union of these cells is homeomorphic to a $(i-1)$-disk, as a consequence of the fact that all these $(i-1)$-cells of different starting and ending vertexes union to form a manifold without singularity. Then, if any of the $(i-1)$-cell aformentioned has greater multiplicity, by an argument similar to that in the inductive proof of lemma \ref{1} below, a sphere must be present, resluting from gluing together cells associated with certain basis elements that show up, so one creats a nontrivial linear relation among basis elements, which is impossible. Now the same Morse height function argument shows that this manifold is homeomorphic to a $(i-1)$-sphere, and therefore one can fill it in with a $i$-cell.

The previous induction goes up to $i=k$. To continue, we have yet to finish our inductive proof of lemma \ref{1} for length $k$. Suppose we have a minimal integral relation among minimal paths of length $k$, then obviously we have unique starting and ending vertexes for all primitive paths involved in this relation. So any such integral relation gives rise to a geometric fact that, the union of all these cells that we have just constructed corresponding to the minimal paths that show up in the relation, counting multiplicity, is a sum of manifolds without boundary,\footnote{Again, a choice of basis that we have already done up to length $k-1$ is important for this to be true} For any such manifold, we can again construct a Morse height function by gluing together individual such functions on cells associated with each minimal element, then the same argument shows that it is a sphere, and therefore corresponds to a minimal relation as one easily convinces oneself. So any minimal relation corresponds to a single sphere. Lemma \ref{1} is thus evident for $k$-paths. Our induction is thus complete.

Therefore we can choose a basis for $\Omega_k(G)$ consisting of minimal paths, and this inductive procedure continues until one associates a cell to each basis element one has chosen, and therefore ends up with a $k$-skeleton. Now one can simply take $k$ to be the upper bound where $\Omega_k(G)$ is nonzero, and one ends up with a CW complex, associated with a choice of integral basis of $\Omega(G)$ consisting of minimal paths. It is evident from the construction that, the cell association still commutes with the boundary operator, and the integral singular cohomology of the CW complex is canonically isomorphic to the digraph path cohomology.

\quash{ 
\begin{lem}\label{extend1}
Any minimal integral relation among minimal paths of a fixed length, is of the form $\sum_{i=1}^m \lambda_iP_i$, where all the coefficients $\lambda_i$ are either 1 or -1. Here the definition of minimal integral relations are the same as that in the definition of minimal paths-- in an obvious sense that it can not be written as a sum of two strictly smaller relations.
\end{lem}
\begin{proof}
By arguments in the proofs of previous lemmas, for any minimal path, one can associate a cell to it by a re-ordering so that it is in our chosen basis. With unique starting and ending vertexes, any such integral relation translates into a geometric fact that, the union of all these cells corresponding to the minimal paths, counting multiplicity, is a sum of manifolds without boundary, (as the boundary will be a sum of lower dimensional cells corresponding to the boundary of the relation, which is zero.) For any such manifold, the same Morse height function argument shows that it is a sphere, and therefore corresponds to a minimal relation. So any minimal relation corresponds to a sphere. The lemma is thus evident.
\end{proof}

We have therefore
\begin{cor}
Any rational basis of $\Omega(G)$ consisting of minimal paths, is an integral basis.
\end{cor}

and 

\begin{cor}
For any integral basis of $\Omega(G)$ consisting of minimal paths, the statement regarding $\lambda_i$ in lemma \ref{1} holds.
\end{cor}

\begin{rmk}
It may be of some interest to find a purely combinatorial proof of these lemmas.
\end{rmk}

So in particular, any such basis defines a CW complex in the same way. 
}

Our next step is to construct a homotopy between any such CW complexes. For this purpose, it suffices to show it for each $k$ step by step, where $k$ is the length of path, and the change of basis can be done step by step, where for each step, only basis regarding length $k$ change.

Next, we let $a_1,...,a_s$ be any other integral basis of $\Omega_k(G)$ consisting of minimal elements. Then the change of basis from $b_1,...,b_s$ to $a_1,...,a_s$ can be done in a sequence of $s$ steps, where each step can be expressed as the form $c_1,c_2,...,c_s\rightarrow d_1,c_2,...,c_s$, corresponds to a change of a single basis element from $c_1$ to $d_1$ corresponding to a minimal integral relation expressing $d_1$ as an integral linear combination involving $c_1$ of the basis elements $c_1,c_2,...,c_s$. By lemma \ref{1} and its proof, we see there is the following continuous map of topological spaces that we can define:

$c_1$ corresponds to a cell. Write it as the unique integral linear combination of $d_1,c_2,...,c_s$. we "collapse" this $c_1$ cell onto the union of cells corresponding to this integral linear combination, which can be viewed as a refinement of the $c_1$ cell prescribed by this linear relation. This procedure does not affect cells of strictly lower dimensions, and it is clear that this "collapsing" can be extended to a continuous map of the two CW complexes corresponding to these two different basis: i.e. one extends this map in an obvious way to higher dimensional cells. There is of course a continuous map in the reversed direction by collapsing from the second basis to the first basis. One checks directly that the composition of these two maps is homotopic to the identity map, basically by "slowly pulling the string back".\footnote{The reader can convince himself/herself easily through a 1-dimensional example.} Therefore, any such collapsing is a homotopy.

Therefore for each digraph $G$, one assigns a CW complex unique up to homotopy. We next show that this assignment is functorial: meaning that it defines a functor from the category of digraphs, where morphisms are defined in a particularly strict sense that we will explain below, to a skeleton of the homotopy category of CW complexes: For this homotopy category, we mean that the objects are CW complexes, while the morphisms are homotopy classes of continuous maps of topological spaces. On the other hand, a skeleton may not sound attractive, however, it can help to express things fast in a more formal way that is useful to deduce some expected properties of digraph cohomology quickly. We will not go any deep into these abstract nonsense in this paper, nor will we make serious effort to find the best way to abstractly formulate this association of CW complexes to a digraph with a choice of basis, as that may better be done later if it becomes necessary. Given any map from $G$ to another digraph $G_1$, which means that vertex maps to vertex, and directed edge maps to directed edge, that preserves the incidence relations among directed edges and vertexes. For our first discussion below, we do not allow different vertexes to map to the same vertex, and we only establish the functoriality below in this narrow sense. We will see the discussion can probably be extended in a larger cartegory, where functorially in a broader sense holds. It is clear that any minimal path is mapped to a linear combination of allowed $\partial$-invariant paths of the same length, which is a sum of minimal paths. Pick any integral basis of $\Omega(G)$ and $\Omega(G_1)$ consisting of minimal paths, we construct a continuous map from $T_G$ to $T_{G_1}$ inductively: first, vertexes and directed edges are mapped to their images. Now suppose cells corresponding to minimal paths of length strictly less than $k$ are mapped, then for any minimal path $P$ of length $k$ in the chosen basis, the image can be decomposed as a sum of minimal paths of $G_1$ again with coefficients being 1 or -1, which themselves then correspond to unions of $k$-cells in $T_{G_1}$ homeomorphic to $k$-disks with boundary corresponding to the boundary of the minimal paths, and furthermore any basis element that appears is with multiplicity 1 or -1. Thus one can homeomorphically map the $k$-cell associated to $P$, to the union of these $k$-cells, in terms of a refinement (subdivision) of the cell, which gives the desired map inductively. Note that this procedure does not affect maps of cells of strictly lower dimensions that are already defined. It is then routine to check the functorial properties, as refinements compose in a desired way.

When directed edges are allowed to collapse, and in particular different vertexes are allowed to map to the same vertex, a digraph may be mapped to a multidigraph, which means multi-edges with arbitrary orientations and self-loops are allowed.\footnote{It can also happen that a digraph still maps to a digraph, but our previous discussion may encounter problems of degeneration.} In the larger category of multidigraphs, a morphism is defined to be a map that takes vertex to vertex, and directed edge to directed edge, that preserves the incidence relations among directed edges and vertexes. No more restrictions will be put. To extend our discussion to this larger category, we need to relax our definition of a path and the regularity condition, in a precise way that we allow paths that result from various kinds of degenerations. We exhibit in the following a candidate choice of such definitions.

\begin{dfn}
A virtual primitive path is an ordered string of vertexes $V_0V_1...V_s$, together with the following data: for any pair of consequtive vertexes $V_k,V_{k+1}$ in the string (k=0,1,...,s-1), either one specifies a directed edge connecting them, or  $V_kV_{k+1}$ is not a directed edge, and furthermore, for any consequtive pairs of vertexes in any string of vertexes that appear as a (formal) component of $\partial(P)$, or boundary components of boundary components and so on, one either specifies a directed edge connecting them, or there is no directed edge between them.  These specifications have to be done in a way compatible with all incidence relations among paths.
\end{dfn}

\begin{dfn}
A primitive path is a virtual primitive path, such that for any pair of consequtive vertexes $V_k,V_{k+1}$ in the string (k=0,1,...,s-1), either we specified a directed edge connecting them, or $V_kV_{k+1}$ is not a directed edge, but $V_k=V_{k+1}$.
\end{dfn}

\begin{dfn}
If a primitive path satisfies the additional condition that, any directed edge appears at most once as a segment of the path, and any virtual primitive path of length one less appears at most once in the formal components of $\partial(P)$ before any cancellation, then we call it a regular primitive path. 
\end{dfn}

\begin{dfn}
A path is an integral linear combination of regular primitive paths, and a $\partial$-invariant path is a path, whose $\partial$ is a linear combination of primitive paths. The space of $\partial$-invariant paths is denoted by $\Omega$. 
\end{dfn}

\begin{rmk}
This definition reflects the fact that, it is possible that, some boundary components of a cell collapse, while the cell itself stays a cell. So we do not require all boundary components to be regular.
\end{rmk}

From these definitions or perhaps some variants of them, we expect that the construction of the CW complex generalizes to multidigraphs, and functoriality holds in the broad sense stated. One needs to define the cohomology with a little more care similar to what is done in \cite{GLMY}, to account for the new regularity condition. We leave the details of this to a future writing. The cells in this more general setting, should all be regared as obtained from various contractions from the cells in the old setting.  For functoriality, given two multidigraphs $G$, $G_1$, a morphism between them, and a minimal path $P$ in $G$, one in general may need to contract the cells associated with $P$ in the way prescribed by the digraph morphism, and then do the map described above to match the choice of integral basis of the second multidigraph. Note that a cell may be mapped to lower dimensional cells in general.


\begin{rmk}
Note that, for the category of digraphs with morphism defined in our narrow sense, the resulting CW complex has the property that any attaching map is an obvious homeomorphism. However, things will be more complicated in the bigger category of multidigraphs.
\end{rmk}

Coming back to digraphs, by \cite{GLMY} it is evident that, given integral basis of two digraphs, then their product is an integral basis of the product digraph, and furthermore taking boundary of products of paths satisfies the Leibniz rule, which implies that our association of a CW complex to a digraph preserves products.

\begin{rmk}
As we have seen, one can associate a cell to any minimal path, and thus actually construct a CW complex in a canonical way, from $G$ without a choice of integral basis as above, and the construction also probably have all these nice functorial properties. However, the cohomology of this new CW complex will get additional contributions from linear relations among minimal paths, which perhaps makes this construction less appealing.
\end{rmk}

\section{Some immediate consequences}
It then follows from simple abstract nonsense that, a homotopy between digraphs induces isomorphisms of cohomology groups, and that the Kunneth formula holds for digraph cohomology. Furthermore, one can define arbitrary higher homotopy groups of a digraph, in terms of that of the CW complex. On the other hand, the cohomology of digraphs becomes a functorial graded ring as that of the CW complex is such a graded ring under the cup product. It turns out that this product can be lifted to the level of forms, which are defined to be elements in $\Omega^k$, the dual of $\Omega_k$,  and the lift still respects associativity and the Lebniz rule, and is functorial. Most of these are first proved in \cite{GLMY}\cite{GLMY2}. We show below that a very simple formula exists for this lifted product\footnote{The formula is actually implicitly contained in \cite{GLMY2}, or should be at least expected in any case, but here we provide a more geometric understanding of it.}, which may be relevant e.g. in studying some gauge field theories on the digraph. 

One sees from the construction of the CW complex that, for any minimal path in the chosen basis $P=\sum_{k=1}^mc_kp_k\in \Omega_k(G)$, where $p_k,k=1,2,...,m$ are primitive regular allowed paths, there exists a unique subdivision of cells, given by connecting all unconnnected edges in every $p_k$ in the same direction of the path, so that each $p_k$ becomes a complete graph. After this subdivision, the cell associated with $P$ is divided into a sum of simplexes, each associated with a $p_k$ with the newly connected edges. One can do this subdivision to all cells associated with basis elements in a consistent way, and then the CW complex becomes a simplicial complex, whose simplicial cohomology is canonically isomorphic to the singular cohomology of the CW complex. The cup product in this simplicial complex has the well-known simple formula in terms of simplexes, which then translates into the corresponding formula for the cup product in the CW complex restricted to the cells we are considering. Unravelling the definitions, one sees that this restriction actually gives rise to the formula for the functorial cup product of forms on digraphs. Let $\alpha\in\Omega^p(G)$, and $\beta\in\Omega^q(G)$, and $k=p+q$. Suppose $p_k=V_0...V_{p+q}$. We let $p_k|_{0...p}$ and $p_k|_{p...p+q}$ denote the allowed paths $V_0...V_p$ and $V_p...V_{p+q}$, respectively, resulted from truncating $p_k$ in the way described. Then we have the formula for the cup product $\alpha\cup\beta$ on $P$ as follows:

\begin{equation}
\alpha\cup\beta(P)=\sum_{k=1}^mc_k\alpha(p_k|_{0...p})\beta(p_k|_{p...p+q})
\end{equation}

Note that, the above formula does not make sense in a first glance, as each individual truncation may not be in $\Omega$, however, the formula is understood in the sense that one needs to first merge together all terms with the same truncation in the argument\footnote{Namely, the same $\alpha(p_k|_{0...p})$ or $\beta(p_k|_{p...p+q})$.}, in the above sum. Then it is an easy exercise to show that it indeed makes sense after the merging. One sees also from this formula that it is independent of our choice of basis.

Here we also explain a few words regarding the homotopy invariance property: a homotopy of two maps of digraphs is defined in direct analogy with the corresponding concept in topology \cite{GLMY2}, and applying our functor, any such homotopy gives rise to a homotopy between two continuous maps of the CW complexes associated with the two digraphs, therefore inducing isomorphic maps on cohomology groups. Furthermore, since homotopy of digraphs becomes homotopy of the CW complexes, our functor provides a tool to study homotopy properties of digraphs, stronger than just the cohomology. 

Furthermore, we expect all these to generalize to multidigraphs (quivers), as the previous section briefly discussed.

\section{Clique cohomology}\label{clique}
From this section, we start to use sheaf theory to reformulate some know (di)graph cohomology theories. This is a preliminary work, and only some very basic things will be presented below. We first illustrate the ideas with the example of clique cohomology, and here we try to follow notations in \cite{K0}.

Let $G$ be a finite graph.\footnote{More generally, the following theory also works for an infinite graph all of whose vertexes have finite degree.} Let $\calG_k$ denote the set of all $K_{k+1}$ subgraphs of $G$, and $\calG=\cup_{k=0}^{\infty}\calG_k$. By a topology $T$ on a graph $G$, we actually mean a topology $T$ on the set $\calG$. Take any topology, one can consider the category of sheaves of abelian groups on $\calG$. Sheaf cohomology is well-defined, as any such category has enough injectives. However, it is crucial that one chooses a suitable topology for all applications that follow. To mimic the case of usual continuous geometry, here we consider the unit ball topology, which is defined by a topology subbasis as the set of all unit balls, whose definition we state below:\footnote{Note there probably exist other good choices of topology for our purpose here.}

For any vertex $v\in G$, we define its unit ball subgraph $B_v$, as the subgraph of $G$, generated by $v$ and all of its neighbors. In other words, it is the largest subgraph of $G$ containing only these vertexes. For each $B_v$, we canonically associate a subset $\calB_v$ of $\calG$ as follows: $x\in\calG$ lives in $\calB_v$ if and only if $x$ is a subgraph of $B_v$.

It is clear from definition, that these $\calB_v$ give a subbasis of topology. Note that $B_v$ is a cone, therefore one has $H^i(B_v)=0$, for all $i>0$, here $H^i$ is the graph cohomology functor defined by the clique complex. For any $x\in\calG$, let us denote by $\calU_x$ the smallest open subset containing $x$, which always exists as there are finitely many such open sets. By our choice of topology, $\calU_x$ corresponds uniquely to a subgraph $U_x$ in the same sense that $\calB_v$ corresponds to $B_v$: $y\in\calG$ lives in $\calU_x$ if and only if $y$ is a subgraph of $U_x$. We have the following characterization of $U_x$:

\begin{lem}\label{stalk}
$U_x$ is the intersection of maximal complete subgraphs containing $x$.
\end{lem}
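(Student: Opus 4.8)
The plan is to establish the two inclusions $U_x \subseteq \bigcap_{x \subseteq K} K$ and $\bigcap_{x \subseteq K} K \subseteq U_x$, where $K$ ranges over maximal complete subgraphs (cliques) of $G$ containing $x$. The first inclusion is the easier one and comes from unravelling the definition of the unit-ball topology. For each maximal complete subgraph $K$ containing $x$, pick any vertex $v \in K$; then every vertex of $K$ is a neighbor of $v$ (or equals $v$), so $K$ is a subgraph of $B_v$, hence $\calU_x \subseteq \calB_v$ by minimality of $\calU_x$ among open sets containing $x$ (note $x \subseteq B_v$ too, so $x \in \calB_v$, so $\calB_v$ is one of the open sets we are taking the minimum over). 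Actually one has to be slightly more careful: I would first observe that $\calU_x = \bigcap \{\calB_v : x \subseteq B_v\}$, since the $\calB_v$ form a subbasis and $\calU_x$ is the smallest open set containing $x$ — any open set containing $x$ contains a basic (finite-intersection) open set containing $x$, and every subbasic piece of that must itself contain $x$. Translating this intersection of $\calB_v$'s back to subgraphs gives $U_x = \bigcap \{ B_v : x \subseteq B_v\}$. Now if $K$ is a maximal clique containing $x$ and $v \in V(K)$, then $x \subseteq K \subseteq B_v$, so $B_v$ appears in this intersection; but also $K \subseteq B_w$ fails in general for $w \notin V(K)$ unless forced — the point is only that $U_x \subseteq B_v$ for each such $v$, and intersecting over all $v \in V(K)$ we still only get $U_x \subseteq \bigcap_{v \in V(K)} B_v \supseteq K$, which is not immediately $U_x \subseteq K$. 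So the clean route is the reverse: show directly that $U_x \subseteq K$ for each maximal clique $K \supseteq x$.

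For that direction, suppose $y$ is a vertex of $U_x$, i.e. $y \in B_v$ for every $v$ with $x \subseteq B_v$. I claim $y \in V(K)$. If not, then since $K$ is a maximal clique, $y$ together with $V(K)$ does not form a clique, so there is some $u \in V(K)$ with $u \not\sim y$ and $u \neq y$. But then $x \subseteq K \subseteq B_u$ (as $u \in V(K)$ and $K$ is complete, every vertex of $K$ is a neighbor of $u$ or equals $u$), while $y \notin B_u$ since $y$ is neither $u$ nor a neighbor of $u$. This contradicts $y \in U_x$. Hence $V(U_x) \subseteq V(K)$; since both are complete subgraphs (one should note $U_x$ is itself complete, because $U_x = \bigcap B_v$ over a nonempty family and — better — because by the subbasis description $U_x$ must be a clique: if it had two non-adjacent vertices $a,b$ it could not sit inside every relevant $B_v$, e.g. $B_a$ does not contain $b$), the inclusion of vertex sets gives $U_x \subseteq K$ as subgraphs. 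Therefore $U_x \subseteq \bigcap_{x \subseteq K} K$.

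Conversely, to see $\bigcap_{x \subseteq K} K \subseteq U_x$, I would show that the right-hand side, call it $W := \bigcap_{x \subseteq K} K$, is an open subgraph containing $x$, since then minimality of $U_x$ forces $U_x \subseteq W$ and combined with the previous paragraph we get equality. It is immediate that $x \subseteq W$. For openness, it suffices (by the subbasis) to exhibit $\calW$, the subset of $\calG$ corresponding to $W$, as a union of basic open sets, or more simply to check that for each clique $z \subseteq W$ we have $\calU_z \subseteq \calW$; equivalently $U_z \subseteq W$. But $z \subseteq W \subseteq K$ for every maximal clique $K \supseteq x$, so every such $K$ also contains $z$, hence by the inclusion already proved (applied to $z$ in place of $x$, in the form $U_z \subseteq K$ for every maximal clique $K \supseteq z$) we get $U_z \subseteq \bigcap_{z \subseteq K'} K' \subseteq \bigcap_{x \subseteq K} K = W$. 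This shows $\calW$ is open, completing the argument.

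The main obstacle I anticipate is purely bookkeeping: being careful that the passage between open subsets of $\calG$ and subgraphs of $G$ is an honest order-isomorphism for the sets that actually arise (the $\calB_v$, the $\calU_x$, and intersections thereof), and that $U_x$ is genuinely a complete subgraph — without this last point the set-theoretic containment of vertices does not upgrade to containment of subgraphs. Once the dictionary "$\calU_x \leftrightarrow U_x$, finite intersections of subbasic opens $\leftrightarrow$ intersections of the $B_v$" is set up cleanly, the two inclusions are short. No deep input is needed beyond the definition of the unit-ball topology; in particular the remark that $B_v$ is a cone (used later for cohomology vanishing) is not needed here.
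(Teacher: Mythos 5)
Your first inclusion ($U_x\subseteq\bigcap_{x\subseteq K}K$) is correct and is essentially the paper's own argument: for a vertex $y$ of $U_x$ not in $K$, maximality of $K$ gives $u\in K$ with $u\neq y$, $u\not\sim y$, and then $B_u$ is a relevant ball (it contains $K$, hence $x$) missing $y$, a contradiction. Two small remarks: your parenthetical argument that $U_x$ is complete quietly assumes $B_a$ is a \emph{relevant} ball, i.e.\ $x\subseteq B_a$ for a vertex $a$ of $U_x$; this is true but needs a line (each vertex $u$ of $x$ gives a relevant ball $B_u$, so $a\in B_u$, i.e.\ $a$ equals or is adjacent to every vertex of $x$). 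In any case completeness of $U_x$ is not needed: a maximal complete subgraph $K$ contains every edge of $G$ between its vertices, so containment of vertex sets already upgrades to containment of subgraphs.

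The genuine gap is in the converse inclusion $\bigcap_{x\subseteq K}K\subseteq U_x$. Your plan --- show that $W:=\bigcap_{x\subseteq K}K$ is open and contains $x$, then invoke minimality of $U_x$ --- cannot work: minimality of $\calU_x$ among open sets containing $x$ only yields $\calU_x\subseteq\calW$, i.e.\ $U_x\subseteq W$, which is the inclusion you already have, and it can never yield $W\subseteq U_x$ (an open set containing $x$ may be as large as all of $\calG$). So ``combined with the previous paragraph we get equality'' does not follow, and the openness of $\calW$, though correctly proved via $U_z\subseteq W$ for $z\subseteq W$, is irrelevant to the direction you need. The missing step --- and the paper's argument --- is to show $W\subseteq B_v$ for every subbasic ball with $x\subseteq B_v$: for such a $v$, every vertex of $x$ equals or is adjacent to $v$, so $x$ together with $v$ spans a complete subgraph $K_1$; extend $K_1$ to a maximal complete subgraph $K_2$. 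Then $K_2\subseteq B_v$ (each vertex of $K_2$ is $v$ or a neighbour of $v$, and $B_v$ is induced) and $x\subseteq K_2$, hence $W\subseteq K_2\subseteq B_v$. Intersecting over all such $v$ and using your identity $U_x=\bigcap\{B_v: x\subseteq B_v\}$ gives $W\subseteq U_x$, completing the proof.
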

\begin{proof}
Suppose a vertex $v$ does not belong to some maximal complete subgraph $K$ containing $x$. Then there exists a vertex $w$ in $K$, such that $w$ is not connected to $v$ by an edge. Then $B_w$ contains $U_x$, but on the other hand, $v$ is not in $B_w$. So $v$ is not in $U_x$, which implies that $U_x\subset K$, so $U_x$ is contained in the intersection of maximal complete subgraphs $\cap K$ containing $x$. Conversely, if $v_1$ is a vertex such that $B_{v_1}$ contains $x$, then the complete graph $K_1$ containing both $v_1$ and $x$ is a subgraph of $G$. Consider the maximal complete subgraph $K_2$ of $G$ containing $K_1$: we have that $K_2\subset B_{v_1}$ by the definition of the unit ball subgraph. So $\cap K\subset K_2\subset B_{v_1}$, which proves the inclusion in the other direction.
\end{proof}
As a consequence, we have
\begin{cor}\label{scontractible}
$U_x$ is a complete subgraph, and in particular, $H^i(U_x)=0$ for any $i>0$.
\end{cor}

Take $A$ to be any abelian group, next we construct a flasque resolution of the constant sheaf $\calA$ on $\calG$ with values in $A$.

Take any $\calU\subset\calG$ an open subset.  Let $\calU_k$ denote $\calG_k\cap \calU$. Define $C^k(\calU)$ to be the abelian group of continuous functions, from $\calU_k$ to $A$, where $\calU_k$ is equipped with the subset topology (which actually does not matter), and $A$ the trivial topology. It is easy to check from definition, that the assignment $\calU\rightarrow C^k(\calU)$ defines a flasque sheaf $\calC^k$ on $\calG$. The differential of the clique complex gives rise to a differential mapping $\calC^k$ to $\calC^{k+1}$, and making it into a complex of sheaves. Furthermore, any section of the constant sheaf $\calA$ on $\calU$ is a function that is constant on every connected component of $\calU$, thus can be mapped to a section of $C^0(\calU)$, by associating the vertexes in each connected component the corresponding constant value in $A$. We have the following

\begin{lem}\label{res}
$\calC^k$ gives a flasque resolution of the constant sheaf.
\end{lem}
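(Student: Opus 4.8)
The plan is to verify the defining conditions of a flasque resolution: (i) the complex $\calC^0 \to \calC^1 \to \cdots$ is exact at every stalk except at $\calC^0$, where the kernel is exactly $\calA$; and (ii) each $\calC^k$ is flasque, which has already been observed from the definition since an arbitrary continuous function on $\calU_k$ extends to $\calG_k$ (for instance by zero off $\calU_k$, using the trivial topology on $A$). So the work is entirely in (i), and by general sheaf theory it suffices to check exactness on stalks.

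First I would identify the stalk of $\calC^k$ at a point $x \in \calG$. Since $\calU_x$ is the smallest open set containing $x$, the stalk $(\calC^k)_x$ is $C^k(\calU_x)$, i.e. the continuous (= locally constant, but here just arbitrary, since the relevant sets are finite and $A$ carries the trivial topology) functions from $(\calU_x)_k$ to $A$. By Lemma~\ref{stalk} and Corollary~\ref{scontractible}, $U_x$ is a complete subgraph, say $U_x = K_{n+1}$ on the vertex set of $x$ together with possibly more vertices; in particular $(\calU_x)_k$ is just the set of $(k+1)$-element subsets of the vertex set $V(U_x)$, and the differential on $(\calC^\bullet)_x$ is literally the simplicial cochain differential of the simplex on $V(U_x)$ with coefficients in $A$. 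Then exactness of the stalk complex away from degree $0$, with $H^0 = A$, is precisely the statement that the full simplex (a contractible simplicial complex) has cohomology $A$ in degree $0$ and $0$ above — equivalently, the combinatorial identity $H^i(U_x) = 0$ for $i > 0$ already recorded in Corollary~\ref{scontractible}, together with connectedness giving $H^0 = A$. Finally I would check that the map $\calA \to \calC^0$ described before the lemma — sending a locally constant section to the corresponding $A$-valued function on vertices — is injective with image exactly $\ker(\calC^0 \to \calC^1)$ at every stalk, which again reduces to: a function on the vertices of the complete graph $U_x$ whose coboundary vanishes is constant, and $U_x$ is connected.

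The main obstacle, such as it is, is bookkeeping rather than depth: one must be careful that the stalk of the sheaf $\calC^k$ at $x$ is computed using $\calU_x$ (the \emph{smallest} open neighborhood) and not some ball $\calB_v$, so that the stalk complex is exactly the cochain complex of the simplex on $V(U_x)$ — this is where Lemma~\ref{stalk} and its corollary do the real work, since without knowing $U_x$ is complete one would not get acyclicity. A secondary point to handle cleanly is the claim that $\calC^k$ is flasque: one should note the restriction maps $C^k(\calG) \to C^k(\calU)$ are surjective because a continuous function on $\calU_k$ into the trivially-topologized $A$ is simply an arbitrary function, which extends arbitrarily (e.g. by a fixed constant) to all of $\calG_k$; continuity imposes no constraint. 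Once these two points are in place, assembling them into "exact sequence of sheaves $0 \to \calA \to \calC^0 \to \calC^1 \to \cdots$ with all $\calC^k$ flasque," hence a flasque resolution, is immediate.
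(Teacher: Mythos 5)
Your proposal is correct and follows essentially the same route as the paper: flasqueness is immediate from the trivial topology on $A$, exactness at $\calA$ and $\calC^0$ is the easy connectedness statement, and exactness at higher $\calC^k$ is checked on stalks, where the smallest neighborhood $\calU_x$ together with Lemma~\ref{stalk} and Corollary~\ref{scontractible} identifies the stalk complex with the (acyclic) cochain complex of a simplex. You have merely spelled out the details the paper compresses into ``unraveling the definition, the exactness after taking stalks reduces to Corollary~\ref{scontractible}.''
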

\begin{proof}
The exactness at $\calA$ and $\calC^0$ is obvious. At general $\calC^k$, we look at each stalk. Unraveling the definition, the exactness after taking stalks reduces to corollary \ref{scontractible}.
\end{proof}
Taking global sections, we therefore have the following

\begin{thm}\label{iso1}
There is a canonical isomorphism $H^i(\calG,\calA)\cong H^i(G,A)$.
\end{thm}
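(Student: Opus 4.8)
The plan is to deduce Theorem~\ref{iso1} from Lemma~\ref{res} by the usual mechanism: a resolution by $\Gamma$-acyclic sheaves computes sheaf cohomology, and here the resolution has been arranged so that its complex of global sections is literally the clique cochain complex of $G$. Concretely, Lemma~\ref{res} provides an exact sequence of sheaves $0\to\calA\to\calC^0\to\calC^1\to\cdots$ with each $\calC^k$ flasque. Since a flasque sheaf on an arbitrary topological space is acyclic for the global-sections functor (the standard argument: embed a flasque sheaf into an injective one, use that the quotient of a flasque sheaf by a flasque subsheaf is flasque and that $\Gamma$ is exact on short exact sequences whose first term is flasque, then induct on cohomological degree; no finiteness or paracompactness hypothesis on the space is needed), one may use $\calC^\bullet$ to compute the right-derived functors of $\Gamma(\calG,-)$:
\[
H^i(\calG,\calA)\;\cong\;h^i\!\left(\Gamma(\calG,\calC^0)\longrightarrow\Gamma(\calG,\calC^1)\longrightarrow\cdots\right).
\]

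It then remains to recognize the complex on the right. By construction $\Gamma(\calG,\calC^k)=C^k(\calG)$ is the group of continuous maps $\calG_k\to A$ for $A$ carrying the trivial topology; since every set map into a space with the trivial topology is continuous, $C^k(\calG)$ is simply the group of all functions $\calG_k\to A$, which is the group of $A$-valued $k$-cochains of the clique complex of $G$. Chasing the definition of the sheaf differential $\calC^k\to\calC^{k+1}$ shows that on global sections it is exactly the clique coboundary operator. Hence $h^i\bigl(\Gamma(\calG,\calC^\bullet)\bigr)=H^i(G,A)$, and combined with the displayed isomorphism this yields $H^i(\calG,\calA)\cong H^i(G,A)$. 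The isomorphism is canonical because the augmentation $\calA\to\calC^0$ is canonical, the comparison of a flasque resolution with an injective resolution is canonical up to chain homotopy, and the identification of $\Gamma(\calG,\calC^k)$ with the clique cochains involves no choices.

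I do not anticipate a genuine obstacle here: the only geometric content — that the stalk complexes of $\calC^\bullet$ resolve $A$, which by Lemma~\ref{stalk} reduces to $U_x$ being a complete graph and hence having vanishing higher clique cohomology (Corollary~\ref{scontractible}) — is already contained in the proof of Lemma~\ref{res}. The two points worth stating explicitly in the write-up are (i) that $\calC^k$ is genuinely a sheaf and is flasque, which is immediate since functions glue and restriction of functions along inclusions of opens is surjective, and (ii) the general homological fact that flasque resolutions compute sheaf cohomology on any topological space, so that the existence of enough injectives (already noted) is all the abstract input required. Granting these, Theorem~\ref{iso1} is a formal consequence of Lemma~\ref{res}.
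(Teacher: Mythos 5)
Your proposal is correct and follows exactly the paper's route: Theorem~\ref{iso1} is obtained by taking global sections of the flasque resolution from Lemma~\ref{res}, using that flasque sheaves are $\Gamma$-acyclic and that $\Gamma(\calG,\calC^k)$ is literally the group of clique $k$-cochains with the clique coboundary. Your write-up simply makes explicit the standard homological facts the paper leaves implicit.
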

where $H^i(G,A)$ denotes the graph cohomology defined by cliques, taking values in $A$.

Next, we consider \v{C}ech cohomology. Take a finite open cover $\calU_i,i=1,2,...,s$ of $\calG$, one forms the \v{C}ech complex for any sheaf $\calF$ of abelian groups. As will be expected, we have

\begin{lem}
For each $i$, there is a natural map $\check{\mathrm{H}}^i(\calG,\calF)\rightarrow H^i(G,\calF)$, functorial in $\calF$.
\end{lem}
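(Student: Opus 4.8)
The plan is to mimic the classical comparison between \v{C}ech and derived-functor (sheaf) cohomology, which exists for any topological space and any open cover; nothing here is special to $\calG$ beyond the fact that sheaf cohomology is well-defined because the category of sheaves of abelian groups on $\calG$ has enough injectives, as already noted. First I would fix notation: for an open cover $\calU = \{\calU_i\}_{i=1}^s$ of $\calG$ and a sheaf $\calF$, write $\check{C}^{\bullet}(\calU,\calF)$ for the \v{C}ech complex with $\check{C}^p(\calU,\calF) = \prod_{i_0 < \dots < i_p} \calF(\calU_{i_0} \cap \dots \cap \calU_{i_p})$, so $\check{\mathrm{H}}^i(\calG,\calF)$ is by definition the $i$-th cohomology of this complex.

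The approach I would take is to build the natural map via a double complex. Choose an injective resolution $\calF \to \calI^{\bullet}$ in the category of sheaves on $\calG$. Form the double complex $K^{p,q} = \check{C}^p(\calU,\calI^q)$ with the \v{C}ech differential in the $p$-direction and the resolution differential in the $q$-direction. Its total complex has two spectral sequences. Taking cohomology first in the $q$-direction: since each $\calI^q$ is injective, hence flasque, its \v{C}ech cohomology on any cover is concentrated in degree $0$ and equals $\Gamma(\calG,\calI^q)$ (the standard fact that flasque sheaves have vanishing higher \v{C}ech cohomology — this uses only that restriction maps of a flasque sheaf are surjective and a partition-of-the-identity-free argument on the \v{C}ech differential). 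Hence that spectral sequence collapses and the total cohomology is $H^i(\Gamma(\calG,\calI^{\bullet})) = H^i(\calG,\calF)$, the sheaf cohomology. On the other hand, there is an augmentation $\check{C}^0(\calU,\calF) \leftarrow$ (nothing) — more precisely the inclusion of $\check{C}^{\bullet}(\calU,\calF)$ into the total complex via $\calF \hookrightarrow \calI^0$ in the $q=0$ row. Composing the edge map of the other filtration with this augmentation gives a natural map $\check{\mathrm{H}}^i(\calG,\calF) \to H^i(\mathrm{Tot}^{\bullet}) \cong H^i(\calG,\calF)$, which I would then identify with $H^i(G,\calF)$ exactly when the statement intends this to be the sheaf cohomology $H^i(\calG,\calF)$ (by Lemma \ref{res} and Theorem \ref{iso1}, in the case $\calF = \calA$ this also equals the clique cohomology $H^i(G,A)$). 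Naturality and functoriality in $\calF$ follow because the whole construction — injective resolution up to homotopy, the double complex, and the edge maps — is functorial; any two choices of injective resolution give chain-homotopic results.

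Concretely, to avoid spectral-sequence machinery one can describe the map directly on cochains: given a \v{C}ech cocycle $\sigma \in \check{C}^i(\calU,\calF)$, lift it step by step through the resolution using flasqueness of the $\calI^q$ and exactness of the rows, producing a zig-zag in the double complex whose terminal entry is a global section of $\calI^i$ that is a cocycle for the resolution differential, hence represents a class in $H^i(\calG,\calF)$; standard diagram-chasing shows this is well-defined on cohomology and independent of the choices. I would present whichever of the two descriptions is cleaner in context, probably the short cochain-level zig-zag, and remark that it agrees with the spectral-sequence edge map.

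The main obstacle I anticipate is purely a bookkeeping one: carefully verifying that a flasque sheaf on $\calG$ has trivial higher \v{C}ech cohomology with respect to an \emph{arbitrary} finite cover — the usual proof either passes through sheaf cohomology (which would be circular here) or runs an induction on the size of the cover using the surjectivity of restriction maps. I would run that induction explicitly. A secondary, cosmetic point is matching up $H^i(G,\calF)$ in the statement with the sheaf cohomology $H^i(\calG,\calF)$: the cleanest reading, consistent with Theorem \ref{iso1}, is that for a general sheaf $\calF$ the target is the sheaf cohomology $H^i(\calG,\calF)$, and I would state the lemma that way, noting that for the constant sheaf $\calA$ this is the clique cohomology. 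Everything else — the double complex, the edge maps, functoriality — is routine homological algebra once that vanishing is in hand.
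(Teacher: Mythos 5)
Your construction is correct, and it proves the right statement (the target $H^i(G,\calF)$ in the lemma should indeed be read as the sheaf cohomology $H^i(\calG,\calF)$ of the finite topological space $\calG$, as you note; only for $\calF=\calA$ does Theorem \ref{iso1} convert this into clique cohomology). The paper itself gives no argument: it simply cites Hartshorne III.4, where the map is Lemma 4.4. Hartshorne's route is slightly different from yours: he sheafifies the \v{C}ech complex to get a resolution $\calF\to\mathscr{C}^{\bullet}(\frakU,\calF)$ and then uses the comparison of an arbitrary resolution with an injective resolution to produce, after taking global sections, the map $\check{\mathrm{H}}^i(\frakU,\calF)\to H^i(\calG,\calF)$; you instead form the double complex $\check{C}^p(\frakU,\calI^q)$ and take the edge map of the first-quadrant spectral sequence (or the equivalent cochain-level zig-zag). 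Both are standard and yield the same natural, $\calF$-functorial map, so nothing is missing. One remark on the obstacle you flag: the vanishing of higher \v{C}ech cohomology of a flasque sheaf for an arbitrary cover is exactly Hartshorne III.4.3, and its usual proof is not circular --- it passes through the sheafified \v{C}ech resolution (whose terms are flasque when $\calF$ is) together with the independently established fact that flasque sheaves are $\Gamma$-acyclic for derived-functor cohomology; your proposed induction on the size of the cover also works but is more labor than needed. Note too that for constructing the map itself you only need this vanishing for the injective sheaves $\calI^q$, and on the finite space $\calG$ all the covers in play are finite, so either argument applies without further fuss.
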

\begin{proof}
See \cite{H}, III.4.
\end{proof}
Take $\calF=\calA$. Take a finite open cover $\calU_i,i=1,2,...,s$ of $\calG$ such that any intersection has trivial higher cohomology. We as usual have the following
\begin{thm}\label{iso2}
The natural map above gives an isomorphism $\check{\mathrm{H}}^i(\calG,\calA)\cong H^i(\calG,\calA)$.
\end{thm}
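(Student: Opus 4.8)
The plan is to prove Theorem \ref{iso2} by the usual Leray-type double-complex argument: for an \emph{acyclic} cover, \v{C}ech cohomology agrees with derived-functor sheaf cohomology, and the abstract isomorphism coincides with the natural map of the preceding lemma. Write $\frakU = \{\calU_i\}_{i=1}^s$ for the cover, and for a multi-index $J=(i_0,\dots,i_p)$ set $\calU_J = \calU_{i_0}\cap\cdots\cap\calU_{i_p}$. First I would fix a flasque resolution $0\to\calA\to\calI^0\to\calI^1\to\cdots$ of the constant sheaf; one may take the Godement resolution, or, conveniently, the resolution $\calC^\bullet$ of Lemma \ref{res}, which is flasque and whose exactness (by the proof of Lemma \ref{res}, which reduces to Corollary \ref{scontractible} at stalks) is inherited by restriction to any open $\calU_J$. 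The key preliminary observation is that flasqueness is preserved under restriction to opens, so $\calI^\bullet|_{\calU_J}$ is again a flasque, hence $\Gamma(\calU_J,-)$-acyclic, resolution of $\calA|_{\calU_J}$; therefore $H^q(\calU_J,\calA)$ is computed as the $q$-th cohomology of $\Gamma(\calU_J,\calI^\bullet)$, and the acyclicity hypothesis on $\frakU$ says exactly that this vanishes for $q>0$.

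Next I would form the first-quadrant double complex $K^{p,q}=\prod_{|J|=p+1}\Gamma(\calU_J,\calI^q)$, with the \v{C}ech differential in the $p$-direction and the resolution differential in the $q$-direction, and study the cohomology of its total complex $\mathrm{Tot}(K)^\bullet$ via the two spectral sequences. Taking cohomology first in the $q$-direction: the $p$-th column computes $\prod_{|J|=p+1}H^q(\calU_J,\calA)$, which by the observation above is $0$ for $q>0$ and is the group $C^p(\frakU,\calA)$ of \v{C}ech $p$-cochains for $q=0$; hence this spectral sequence degenerates and its abutment is $\check{\mathrm{H}}^\bullet(\calG,\calA)$. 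Taking cohomology first in the $p$-direction: the $q$-th row is the \v{C}ech complex of the flasque sheaf $\calI^q$, whose higher \v{C}ech cohomology vanishes and whose $H^0$ is $\Gamma(\calG,\calI^q)$; hence this spectral sequence also degenerates and its abutment is $H^\bullet(\Gamma(\calG,\calI^\bullet))=H^\bullet(\calG,\calA)$. Comparing the two gives $\check{\mathrm{H}}^i(\calG,\calA)\cong H^i(\calG,\calA)$.

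Finally I would check that this isomorphism is the natural map $\check{\mathrm{H}}^i(\calG,\calA)\to H^i(\calG,\calA)$ and not merely an abstract one: the edge morphism of the first spectral sequence is induced by the augmentation $\calA\to\calI^0$ together with the inclusion of \v{C}ech $0$-cochains into the total complex, and unwinding the construction of the natural map in \cite{H}, III.4, one sees the two agree. Equivalently, one can bypass the explicit double complex and invoke the \v{C}ech-to-derived-functor spectral sequence $E_2^{p,q}=\check{\mathrm{H}}^p(\frakU,V\mapsto H^q(V,\calA))\Rightarrow H^{p+q}(\calG,\calA)$ directly: the presheaf $V\mapsto H^q(V,\calA)$ vanishes on every $\calU_J$ for $q>0$ by hypothesis, so $E_2^{p,q}=0$ for $q>0$ and the edge map is the desired isomorphism. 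Everything here is formal sheaf theory; the only graph-theoretic input is the acyclicity hypothesis on $\frakU$, which is assumed. I do not expect a genuine obstacle — the only points needing care are the (standard) inheritance of flasqueness under restriction and the identification of the abstract isomorphism with the natural map, both routine bookkeeping.
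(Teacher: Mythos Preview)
Your proof is correct and is essentially the same as the paper's: the paper simply cites the proof of Theorem~4.5 on p.~222 of \cite{H}, and what you have written is precisely the standard Leray double-complex/spectral-sequence argument underlying that theorem. No further comment is needed.
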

\begin{proof}
See proof of theorem 4.5 on page 222 of \cite{H}.

\end{proof}
\begin{rmk}
Note that such an open covering always exists, and \ref{stalk} provides a canonical one as such, by \ref{scontractible} and \ref{iso1}, and the evident fact that the subset topology on any open set corresponding to a subgraph, coincides with the unit ball topology of the subgraph.
\end{rmk}
\begin{rmk}
Let us look at a case how the graph cohomology may be glued from smaller pieces at least in principle. Let $\calU$ be any open subset of $\calG$ corresponding to a subgraph $U$. Let us denote the closed subset $\calY=\calG-\calU$. Then all statements of exercises 2.3 and 2.4 on page 212 of \cite{H} apply. In particular, take $\calF=k$, we get the following long exact sequence
\begin{equation}
0\rightarrow H^0_{\calY}(\calG,k)\rightarrow H^0(\calG,k)\rightarrow H^0(\calU,k)\rightarrow H^1_{\calY}(\calG,k)\rightarrow ...
\end{equation}
Where $H^i(\calG,k)$ and $H^i(\calU,k)$ are naturally isomorphic to the usual graph cohomology, as we have seen. The additional piece $H^i_{\calY}(\calG,k)$ may be analyzed by the same flasque resolution \ref{res}. Furthermore, this cohomology with support in $\calY$ satisfies the excision and Mayer-Vietoris sequence. A tricky thing is that the combinatorial translation of such statements may not be nice or very useful in general.
\end{rmk}
\section{Some comments}\label{L}
If one regards a graph $G$ simply as a one-dimensional simplicial complex, and considers its simplicial cohomology, the procedure can again be discretized in the same way: one takes the set $\calG'=\calG_0\cup\calG_1$, and take all the star graphs \cite{K} as the subbases of topology, then in the same way, one can show that the cohomology of the constant sheaf realizes this trivial version of graph cohomology.

Let us take a look at a simple version of the Lefschetz fixed point theorem for graphs \cite{K1}, \footnote{One can also consider more elaborated versions, but here we take the simplest version for the purpose of illustration.}which states that for any automorphism $f$ of a graph $G$, one defines its Lefschetz number as
\begin{equation}
\Lambda(f)=\sum_{i=0}^{\infty}(-1)^iTr(f^*: H^i(G,k)\rightarrow H^i(G,k))
\end{equation}
Then if $\Lambda(f)$ is nonzero, $f$ has at least one fixed simplex, where $k$ is any ground field, and $H^i(G,k)$ is graph cohomology taking values in $k$. The proof of this can be reduced to the familiar case of simplicial complexes, or one shows as usual that it is a consequence of linear algebra.

From our framework, for any injective graph homomorphism $\phi: G_1\rightarrow G_2$, since it maps cliques to cliques, one has an induced continuous map of topological spaces $\calG_1\rightarrow \calG_2$, which we still denote by $\phi$ if no confusion arises. In particular, if $\phi=f$ is an automorphism of $G$,  $f$ is continuous as a map from $\calG$ to itself. On the other hand, by \ref{iso1}, the definition of $\Lambda(f)$ can also be stated using $H^i(\calG, k)$. Therefore, the above Lefschetz theorem for graphs is equivalent to a Lefschetz theorem for $f$ and the topological space $\calG$ with Lefschetz number defined by the sheaf cohomology. It looks to be an interesting question to elaborate on this observation, from the point of view of finite set topology.

Also one notes that the same theorem holds for digraphs: with regard to lemma \ref{SE}, a morphism of digraphs that has no fixed vertexes has to have zero trace in $\Omega_k$, and thus its Lefschetz number has to be zero.

\section{Path cohomology}

For all $k$, choose any integral basis of $\Omega_k(G)$ consisting of minimal elements, and let us call it $B_k$. Let $X_G$ denote the union of these basis as a set. For any path $P\in X_G$ of length $k$, we denote by $G_P$ the smallest subgraph of $G$, such that $P\in\Omega_k(G_P)$. For each $P$, we define $U_P$ inductively to be the union of $\left\{P\right\}$, and $U_Q$, where $Q$ is any element in $X_G$, that appears as a direct summand of an element in $\Omega_k(G_P)$. We define a topology $T$ on $X_G$ by claiming all $U_P$ to form a subbasis of topology. We have
\begin{lem}
$U_P$ is the smallest open subset containing $P$, and $U_{P_1}\cap U_{P_2}=\cup_{x\in U_{P_1}\cap U_{P_2}}U_x$.
\end{lem}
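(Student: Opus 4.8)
The plan is to unwind the inductive definition of $U_P$ and show it is closed under the operation defining it, which forces it to be the smallest open set containing $P$. First I would observe that since $T$ is generated by the subbasis $\{U_P : P \in X_G\}$, a basis of the topology is given by finite intersections of the $U_P$, and every open set is a union of such. So the two claims are tightly linked: once we know that each $U_P$ is the \emph{smallest} open set containing $P$, the formula $U_{P_1}\cap U_{P_2} = \bigcup_{x \in U_{P_1}\cap U_{P_2}} U_x$ follows immediately, because for each $x$ in the (open) intersection the minimal open set $U_x$ is contained in any open set containing $x$, hence in $U_{P_1}\cap U_{P_2}$, and conversely the union obviously contains the intersection since $x \in U_x$. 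So the whole lemma reduces to the first assertion.

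To prove that $U_P$ is the smallest open set containing $P$, I would argue that $U_P$ is itself open, i.e. belongs to $T$, and that it is contained in every subbasic open set $U_Q$ that contains $P$ (and hence, since a neighborhood basis at $P$ is given by finite intersections of such subbasic sets, in every open set containing $P$). The key structural fact is a transitivity/nesting property: if $Q \in U_P$, then $U_Q \subseteq U_P$. This is essentially built into the recursive definition — $U_P$ is the union of $\{P\}$ with all $U_Q$ for $Q$ a direct summand (in the sense of the chosen basis decomposition) of an element of $\Omega_k(G_P)$ — but one must check it is genuinely closed under iteration, namely that if $Q'$ is a direct summand of an element of $\Omega_k(G_Q)$ with $Q \in U_P$, then $Q' \in U_P$. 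This should follow from the monotonicity $G_Q \subseteq G_P$ whenever $Q$ arises this way from $P$: indeed $Q \in \Omega_k(G_P)$ forces $G_Q \subseteq G_P$, so $\Omega_k(G_Q) \subseteq \Omega_k(G_P)$, and any basis element appearing as a direct summand of an element of $\Omega_k(G_Q)$ also appears as one in $\Omega_k(G_P)$, hence already lies in the first stage of the recursion for $U_P$. Thus the recursion stabilizes after one step in the appropriate sense, and $U_P = \bigcup_{Q \in U_P} U_Q$, which both exhibits $U_P$ as a union of subbasic opens (so $U_P \in T$) and gives the nesting $U_Q \subseteq U_P$ for all $Q \in U_P$.

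Granting the nesting property, minimality is straightforward: any open set $V$ containing $P$ contains some finite intersection $U_{Q_1} \cap \cdots \cap U_{Q_r}$ with $P$ in it; but $P \in U_{Q_i}$ gives $U_P \subseteq U_{Q_i}$ for each $i$ by nesting, so $U_P \subseteq \bigcap_i U_{Q_i} \subseteq V$. Hence $U_P$ is the smallest open set containing $P$, and the intersection formula follows as explained above.

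The step I expect to be the main obstacle is verifying the nesting/transitivity property $Q \in U_P \Rightarrow U_Q \subseteq U_P$ cleanly, because the definition of "direct summand of an element of $\Omega_k(G_P)$" is relative to the fixed basis $B_k$ and one must make sure that this relation, together with the support inclusion $G_Q \subseteq G_P$, really does propagate through arbitrarily many iterations of the recursion without enlarging the set. Concretely, the subtlety is that $Q$ may enter $U_P$ as a summand of a \emph{combination} of elements of $\Omega_k(G_P)$, not of a single basis element, and one needs that every basis summand reachable from $Q$ is already reachable from $P$; this is exactly where the inclusion $\Omega_\bullet(G_Q) \subseteq \Omega_\bullet(G_P)$ does the work, so the argument hinges on checking that $G_Q \subseteq G_P$ whenever $Q$ is obtained from $P$ by the summand operation — which is immediate from $G_Q$ being the \emph{smallest} subgraph with $Q \in \Omega_k(G_Q)$ and $Q$ already lying in $\Omega_k(G_P)$. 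Once that monotonicity is in hand, the rest is formal.
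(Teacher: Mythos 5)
Your argument is correct and amounts to precisely the ``check by the definitions'' verification that the paper leaves to the reader (its entire proof of this lemma is that one line): nesting of the $U_Q$'s plus the subbasis description of opens gives minimality, and minimality gives the intersection formula. One remark: the nesting property $Q\in U_P\Rightarrow U_Q\subseteq U_P$ already follows by induction on the recursion, because $U_P$ is defined as the union of $\{P\}$ with the sets $U_Q$ themselves (not just the elements $Q$), so the monotonicity detour via $G_Q\subseteq G_P$ --- whose justification is the one shaky point, since a basis summand of an element of $\Omega_k(G_P)$ is not obviously itself supported in $G_P$ --- is not actually needed for the proof to go through.
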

\begin{proof}
Check by the definitions.
\end{proof}
Thus we have
\begin{cor}
$U_P$ form a basis of topology.
\end{cor}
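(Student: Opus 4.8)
The plan is to read both assertions straight off the recursive definition of $U_P$, combined with the standard behaviour of minimal neighbourhoods in a space whose topology is presented by a subbasis; the only genuine ingredient is the transitivity of the relation ``$P'\in U_P$''. First I would record the trivialities: $U_P$ is one of the sets declared to lie in the subbasis for $T$, hence open, and $P\in\{P\}\subseteq U_P$, so $U_P$ is already an open neighbourhood of $P$. The content of the first clause is that it is the \emph{smallest} one, and the heart of the matter is the implication
\[
P'\in U_P\ \Longrightarrow\ U_{P'}\subseteq U_P .
\]
I would prove this by induction along the recursion defining $U_P$. Inspecting the definition, the basis elements $Q\in X_G$ whose sets $U_Q$ are thrown into the union for $U_P$ are strictly subordinate to $P$ — of smaller length, equivalently with strictly smaller associated subgraph $G_Q\subsetneq G_P$ — and since $A_k(G)=0$ for $k\ge|G|$ the recursion terminates, so $U_Q$ is already constructed and the inductive hypothesis is available for it. Granting this: if $P'\in U_P$ then either $P'=P$ and there is nothing to prove, or $P'\in U_Q$ for one of the $Q$ in the union, whence $U_{P'}\subseteq U_Q$ by the inductive hypothesis and $U_Q\subseteq U_P$ by the very definition of $U_P$, so $U_{P'}\subseteq U_P$.

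Given the implication, the first clause is formal. If $V$ is any open set containing $P$, then by definition of the topology generated by the subbasis $\{U_Q\}_{Q\in X_G}$ there is a basic open set $U_{Q_1}\cap\cdots\cap U_{Q_n}\subseteq V$ with $P\in U_{Q_i}$ for every $i$; the implication gives $U_P\subseteq U_{Q_i}$ for each $i$, hence $U_P\subseteq U_{Q_1}\cap\cdots\cap U_{Q_n}\subseteq V$. Thus $U_P$ is the smallest open set containing $P$.

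For the second clause no further facts about paths are needed. The inclusion $U_{P_1}\cap U_{P_2}\subseteq\bigcup_{x\in U_{P_1}\cap U_{P_2}}U_x$ is immediate because $x\in U_x$ for every $x$. Conversely, $U_{P_1}\cap U_{P_2}$ is a finite intersection of open sets, hence open; so for each $x\in U_{P_1}\cap U_{P_2}$ minimality of $U_x$ forces $U_x\subseteq U_{P_1}\cap U_{P_2}$, and taking the union over all such $x$ gives the reverse inclusion. The same argument shows $V=\bigcup_{x\in V}U_x$ for every open $V$, which is precisely the Corollary that the $U_P$ form a basis of $T$.

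The one place calling for care — the ``main obstacle,'' such as it is in a ``check by the definitions'' lemma — is the well-foundedness invoked to license the induction: one must confirm from the definition of $U_P$ that the recursion never refers back to $U_P$ itself and in fact strictly descends (in path length, or in the size of $G_P$). Once that bookkeeping is settled, the rest is routine point-set topology of Alexandrov-type spaces with least neighbourhoods.
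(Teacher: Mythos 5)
Your overall route is the same as the paper's: establish the preceding Lemma (that $U_P$ is the least open neighbourhood of $P$ and that $U_{P_1}\cap U_{P_2}=\bigcup_{x\in U_{P_1}\cap U_{P_2}}U_x$), for which the paper offers only ``check by the definitions,'' and then read off the Corollary formally. Your point-set part is fine: once you have the implication $P'\in U_P\Rightarrow U_{P'}\subseteq U_P$, minimality of $U_P$ among opens follows from the subbasis description of the topology, the intersection identity follows from minimality applied to each $x$ in the intersection, and a family of opens with that property is a basis.

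The one step that does not survive scrutiny as written is your well-foundedness claim: that every $Q$ contributing $U_Q$ to the union defining $U_P$ has strictly smaller length, ``equivalently'' strictly smaller support $G_Q\subsetneq G_P$. Neither is what the definition gives. $P$ itself appears as a direct summand of an element of $\Omega_k(G_P)$ (namely of $P$), and distinct basis elements of the same length $k$ can lie in $\Omega_k(G_P)$, possibly with support equal to $G_P$; so the recursion can refer to sets of the same length and even loop back (two basis elements with the same support would each appear in the other's defining union). The fact that lengths are bounded by $|G|$ does not remove this circularity at a fixed length. The repair is cheap and does not change your argument's shape: interpret the ``inductive'' definition as the least family closed under the rule, i.e.\ $U_P$ is the set of all $R\in X_G$ reachable from $P$ by a finite chain $P=R_0,R_1,\dots,R_m=R$ in which each $R_{i+1}$ appears as a direct summand of an element of $\Omega(G_{R_i})$. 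Then the key implication $P'\in U_P\Rightarrow U_{P'}\subseteq U_P$ is proved by concatenating chains (induction on chain length rather than on path length), and the rest of your proof of the Lemma and the Corollary goes through verbatim.
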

\begin{rmk}
The definition of $X_G$ and $U_P$ is carefully chosen, so as to take into account the subtle issues involved in the definition of the path cohomology. 
\end{rmk} 
For any $k$, we define a sheaf $\calC^k$ of abelian groups on the topological space $X_G$ as follows: for any open set $U$, one assigns the abelian group of integer valued $\bZ$-linear functions on the $\bZ$-module spanned by the set of length $k$ elements in $U$. It is obvious that $\calC^k$ is a flasque sheaf. It is straightforward to check that the sheaves $\calC^k$ form a complex of sheaves via the natural differential. Therefore, taking global sections, the cohomology of this complex of sheaves computes the path cohomology of digraphs. We have the following lemma
\begin{lem}[Poincare lemma]\label{Poincare}
$\calC^k$ is a flasque resolution of the constant sheaf.
\end{lem}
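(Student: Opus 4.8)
The plan is to verify the two defining properties of a flasque resolution separately. Flasqueness was already observed and is immediate, since a $\bZ$-linear functional on the span of the length-$k$ basis elements lying in an open set extends (by zero on a complementary set of basis elements) to any larger open set. It remains to prove exactness of $0\to\calA\to\calC^0\to\calC^1\to\cdots$ as a complex of sheaves, i.e. exactness on each stalk. Exactness at $\calA$ and at $\calC^0$ is the usual statement: a section of the constant sheaf over a connected open set $U$ is a single value $a\in A$, its image in $\calC^0(U)$ is the functional sending every length-$0$ element (vertex) of $U$ to $a$, and this is precisely the kernel of $d\colon\calC^0\to\calC^1$ because the differential dual to $\partial$ forces a $0$-cochain in its kernel to be constant along every edge of $U$. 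So the real content is exactness at $\calC^k$ for $k\ge 1$.

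Since $U_P$ is the smallest open neighbourhood of the point $P\in X_G$, the stalk $\calC^k_P$ equals $\calC^k(U_P)$, the group of $\bZ$-linear functionals on the free module spanned by the length-$k$ elements lying in $U_P$. The key observation is that $U_P$ is closed under passing to faces: if $Q\in U_P$ then $\partial Q\in\Omega(G_Q)$, so every basis element occurring in the expansion of $\partial Q$ again lies in $U_P$. Hence the subset $D_P:=\bigcup_{Q\in U_P}e_Q$ of the CW complex $T_G$ built in Section \ref{CW} is an honest subcomplex, and $\calC^\bullet(U_P)$ is tautologically its cellular cochain complex (with differential dual to $\partial$, which is the cellular coboundary because the cell assignment commutes with $\partial$). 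Moreover $D_P$ is exactly the closed cell $\bar e_P$ of $P$ together with all of its iterated faces — the faces being realized, when a boundary summand $P'$ of $\partial P$ is not itself a basis element, by the unions of basis cells furnished by Lemma \ref{1} and the disk-gluing arguments of Section \ref{CW}.

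Now I invoke the construction of Section \ref{CW} directly: the cell $e_P$ attached to a minimal path $P$ of length $k$ is a $k$-cell obtained by filling in the $(k-1)$-sphere assembled from the (unions of basis) cells of the summands of $\partial P$, and every cell appearing in that sphere, and in its iterated boundaries, is a face of $e_P$; hence $D_P=\bar e_P$ is a closed $k$-disk. A disk is contractible, so $\tilde H^i(D_P;\bZ)=0$ for all $i$, which is exactly the statement that the augmented cellular cochain complex $0\to\bZ\to\calC^0(U_P)\to\calC^1(U_P)\to\cdots$ is exact. Taking stalks, this gives exactness of $0\to\calA\to\calC^\bullet$ at every point of $X_G$, completing the proof. (For coefficients in a general abelian group $A$ one argues identically, using $H^i(D_P;A)=0$ for $i>0$ and $H^0(D_P;A)=A$.)

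The step I expect to be the main obstacle is the middle one: pinning down precisely that the combinatorially defined open set $U_P$ indexes exactly the cells of the closed cell $\bar e_P$ — in particular that no extraneous basis elements enter $U_P$, and that the boundary summands of $\partial P$ which are not themselves basis elements are correctly accounted for by unions of basis cells — so that $\calC^\bullet(U_P)$ really is the cochain complex of a disk rather than of some larger space. This is the "subtlety in the definition of the path cohomology" alluded to in the remark preceding the lemma, and it is where the careful choice of $X_G$ and $U_P$ is used. Everything else is either bookkeeping or a direct appeal to the Morse-theoretic disk arguments already carried out in Section \ref{CW}; one should also check the base case $k=1$ (where $U_P$ is an edge with its two endpoints and $D_P$ is a $1$-disk) and may wish to record the mild point that "any attaching map is an obvious homeomorphism" guarantees $\bar e_P$ is genuinely a disk and not merely a homotopy disk.
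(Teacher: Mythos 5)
Your proof takes essentially the same route as the paper: the paper's own argument is precisely the reduction, via the minimal open neighbourhoods $U_P$, to the fact established in Section \ref{CW} that $\partial P$ yields a $(k-1)$-sphere, so the closed cell of a minimal $k$-path is a disk and the stalk complex is its (exact) augmented cellular cochain complex. Your more explicit unravelling, including flagging the identification of $U_P$ with the face set of the closed cell of $P$ as the delicate point, is consistent with, and somewhat more detailed than, the paper's sketch.
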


And a simpler version

\begin{lem}[Poincare lemma: baby version]
For any $P\in X_G$, we have $H^i(G_P)=0$ for all $i>0$.
\end{lem}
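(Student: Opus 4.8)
The plan is to reduce the statement to the CW complex of Section~\ref{CW}. Since $P$ is a minimal path lying in $\Omega_k(G_P)$, and since by Corollary~\ref{basis} the $\bZ$-module $\Omega(G_P)$ admits an integral basis consisting of minimal paths, I would fix such a basis $\mathcal B$ with $P\in\mathcal B$ and form the associated CW complex $T_{G_P}$. By the construction of Section~\ref{CW} (under which the cell assignment commutes with $\partial$), the integral singular cohomology of $T_{G_P}$ is canonically isomorphic to the path cohomology $H^\bullet(G_P)$, so it suffices to show that $T_{G_P}$ is contractible; in fact I would prove the sharper statement that $T_{G_P}$ is a single closed $k$-cell, i.e. $T_{G_P}\cong D^k$.

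First I would identify the cells of $T_{G_P}$. The graph $G_P$ contains the support of $P$ (together with at most the extra edges forced by requiring $\partial P$ to be allowed), and every primitive $k$-path in $G_P$ is still a primitive component of $P$, since a maximal-length primitive path in the support of a minimal path is always a component of it. Hence $A_k(G_P)$ is spanned by the components of $P$, and one checks that the $\partial$-invariance conditions cut this down to $\Omega_k(G_P)=\bZ\,P$, so $T_{G_P}$ has a unique $k$-cell $e_P$. By the construction of Section~\ref{CW}, $\partial P$ decomposes uniquely as a combination of minimal $(k-1)$-paths with coefficients $\pm 1$ (Lemma~\ref{min}), the cells attached to these assemble into the $(k-1)$-sphere $\partial e_P$, and $e_P$ fills it; thus $\overline{e_P}\cong D^k$. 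It then remains to check that $T_{G_P}=\overline{e_P}$, i.e. that every minimal path of $\mathcal B$ of length less than $k$ already occurs inside the iterated boundary of $P$. Granting this, $T_{G_P}\cong D^k$ is contractible, whence $H^i(G_P)\cong H^i(T_{G_P};\bZ)=0$ for all $i>0$.

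The main obstacle is precisely this last verification — excluding ``extra'' cells, equivalently showing that $\Omega_k(G_P)$ is free of rank one on $P$ and that no lower-degree minimal path of $\Omega(G_P)$ escapes the boundary chain of $P$; here one must use that $G_P$ is the \emph{smallest} subgraph with $P\in\Omega_k(G_P)$, together with Lemmas~\ref{SE}, \ref{min} and~\ref{1}. A more geometric route that sidesteps this combinatorial point is to transport the piecewise-linear height (Morse) function of Section~\ref{CW} to all of $T_{G_P}$: by Lemma~\ref{SE} the path $P$ has a unique source $S$ and sink $E$, and the argument of Section~\ref{CW} (the only candidate critical points lie in $\{S,E\}\cup S_1\cup E_1$, and those in $S_1$ and $E_1$ are excluded exactly as there) shows that $S$ and $E$ are its only critical points, forcing $T_{G_P}$ to be a disk with $S,E$ on its boundary, hence contractible. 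Either way one obtains the stalkwise vanishing, which is then exactly the local input needed for Lemma~\ref{Poincare}, the open sets $U_P$ having been designed so that their local model is $G_P$.
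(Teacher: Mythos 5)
Your main route coincides with the paper's: the printed proof of this lemma is exactly the remark that it follows from the construction in Section~\ref{CW}, namely that $\partial P$ assembles into a $(k-1)$-sphere which the $k$-cell attached to $P$ fills, so passing to the CW complex $T_{G_P}$ and using the canonical isomorphism with path cohomology is the intended argument. But your proposal is only as strong as the step you yourself flag, and that step is a genuine gap, not bookkeeping. To conclude $H^i(G_P)=0$ you need $T_{G_P}$ to have the reduced cohomology of a point, and your sharper claim $T_{G_P}\cong D^k$ requires precisely that $\Omega_k(G_P)=\bZ P$ and that every basis element of $\Omega_j(G_P)$, $j<k$, already occurs in the iterated boundary of $P$ (equivalently, that $T_{G_P}$ has no cells outside the closure of $e_P$). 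Note that $G_P$ is strictly larger than $Supp(P)$ in general: it contains the ``shortcut'' edges forced by allowedness of $\partial P$, and the fact you invoke (a maximal-length primitive path in the support of a minimal path is a component of it) is stated for $Supp(P)$, not for $G_P$; moreover those extra edges could in principle support $\partial$-invariant paths of intermediate length invisible in the boundary chain of $P$. This is the combinatorial subtlety the paper itself concedes (it claims, but does not print, a combinatorial proof), so it cannot simply be ``granted''.

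Your geometric sidestep does not close the gap. In Section~\ref{CW} the height/Morse argument is applied to the union of the $(k-1)$-cells occurring in $\partial P$, which is a closed $(k-1)$-manifold exactly because $\partial\partial P=0$, and the exclusion of critical points in $S_1$ and $E_1$ uses cancellation of boundary components inside a single $\partial$-invariant path. The full complex $T_{G_P}$ is a priori just a CW complex: it need not be a manifold (possible cells not in the closure of $e_P$, cells of different dimensions meeting along lower strata), so ``transporting the Morse function to all of $T_{G_P}$ and reading off the critical points'' is not a valid argument that $T_{G_P}$ is a disk. To complete the proof along your lines you must either prove the no-extra-cells statement directly (e.g. by showing the level function on $Supp(P)$ extends to $G_P$, so shortcut edges only shorten paths, and then using minimality of $P$ together with Lemmas~\ref{SE}, \ref{min}, \ref{1} to pin down $\Omega_j(G_P)$ for all $j$), or prove separately that the closure of $e_P$ is all of $T_{G_P}$ (or at least a deformation retract of it); as written, the vanishing of $H^i(G_P)$ for $i>0$ is not yet established.
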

\begin{proof} 
The combinatorics of both lemmas are subtle, and the authors only know a combinatorial proof of the baby version lemma. On the other hand, unravelling the definitions, they evidently follow from the construction of the CW complex in section \ref{CW}, namely it follows from the proof that in the inductive process of constructing the CW complex, or in associating a cell to any minimal path, $\partial P$ gives rise to a $(k-1)$-sphere, for $P$ a minimal $k$-path. 
\end{proof}
\quash{
\begin{proof}
We will construct a CW complex from a digraph with a given choice of basis of minimal elements ($X_P$). First, for each length $0$ or $1$ element in $X_G$, we associated the corresponding $0$ and $1$ cell with the obvious incidence relation specified by the digraph. Length $2$ element in $X_G$ only have two possibilities, and we associate again the corresponding $2$ cell with boundary given by the boundary of the path. We next do induction: take any length $k<N$ path $P$ in $X_G$, its boundary is a sum of length $k-1$ elements in $X_G$, for which we have already associated cells of dimension $k-1$. Suppose the union of these closed $k-1$ cells is homeomorphic to $S^{k-1}$, thus we can associate to $P$ a $k$ cell with boundary given by the union of these $k-1$ cells. We know that $P$ has a unique starting vertex S and a unique ending vertex E. The inductive assumption implies that, after deleting S and edges involving S from $G_P$, we get a new graph $G_P-S$, which corresponds to a CW complex from our correspondence, that is a union of closed balls of dimension $k-1$, each ball corresponding to a connected component of the 1-neighborhood of S.

Then pick any $P$ in $X_G$ of length $N$, with starting and ending vertexes S and T. Deleting S and the edges of $G_P$ associated with $S$, then $G_P-S$ is by inductive hypothesis, a union of closed balls of dimension $N-1$. (note that subdivision and writing a minimal element in terms of a linear combination of other minimal elements in $X_G$ may be needed, however it is clear that, these two operations do not alter the homeomorphism type of the CW complex.) We next prove that the mutual intersection of these balls are all balls of dimension $N-2$. To this end, we first label the connected components of the 1-neighborhood of S by $C_i, i=1,...,k$, and label the unique "starting vertex" of each component by $S_i$. We claim that, the intersection of any two smaller $G_P$ corresponding to the connected component, if not empty, corresponds to a whole connected component of the 1-neighborhood of the starting vertex of any of these two $G_P$: this basically is a consequence of the lemma that, any maximal length path in $G_P$ is a component of $P$. Therefore, our claim is a consequence of the inductive hypothesis. Furthermore, any triple intersection of these $G_P$ is empty, following from the minimality of $P$. These imply that the balls associated with these $G_P$ again glue to a ball.

On the other hand, $\partial P$ is a sum of two parts: the part involving the starting vertex, and the part not. The second part as we just discussed, corresponds to a CW complex that is homeomorphic to a closed ball of dimension $N-1$. The first part is certainly a union of closed balls of the same dimension, and obviously, the union is homeomorphic to a cone whose boundary agrees with the boundary of the closed ball that corresponds to the second part. Therefore, $\partial P$ corresponds to a CW complex that is the union of two $N-1$ balls along their boundary $N-2$ sphere, which then form a $N-1$ sphere. Induction is thus completed.
\end{proof}
}
Thus we have
\begin{thm}
The cohomology of the constant sheaf on $X_G$ is naturally isomorphic to the path cohomology of $G$.
\end{thm}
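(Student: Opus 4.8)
The plan is to run the same formal argument used for Theorem~\ref{iso1}: exhibit a flasque resolution of the constant sheaf whose complex of global sections is, by construction, the complex computing path cohomology, and then invoke the fact that flasque resolutions compute sheaf cohomology.

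Concretely, I would proceed in three steps. First, observe that $X_G$ is a finite space, that each $\calC^k$ is flasque, and that the $\calC^k$ with the natural differential form a complex of sheaves equipped with an augmentation $\underline{\bZ}\to\calC^0$ sending a locally constant integer function to the corresponding $0$-cochain. By the Poincar\'e lemma (Lemma~\ref{Poincare}) the augmented complex $0\to\underline{\bZ}\to\calC^0\to\calC^1\to\cdots$ is exact, so $\calC^\bullet$ is a flasque resolution of $\underline{\bZ}$; since flasque sheaves are $\Gamma(X_G,-)$-acyclic, $H^i(X_G,\underline{\bZ})$ is computed, canonically, by the complex $\Gamma(X_G,\calC^\bullet)$. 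Second, identify that complex: $\Gamma(X_G,\calC^k)$ is the group of $\bZ$-linear functionals on the free $\bZ$-module on the length-$k$ points of $X_G$, i.e.\ on the chosen basis $B_k$ of $\Omega_k(G)$ (Corollary~\ref{basis}), so $\Gamma(X_G,\calC^k)=\Hom_\bZ(\Omega_k(G),\bZ)=\Omega^k(G)$ --- an identification that depends only on $\Omega_k(G)$, not on the chosen basis --- and the differential of $\calC^\bullet$ is the transpose of $\partial$, so $\Gamma(X_G,\calC^\bullet)$ is exactly the path cochain complex $(\Omega^\bullet(G),d)$. Third, combine: $H^i(X_G,\underline{\bZ})\cong H^i(\Omega^\bullet(G),d)$, the path cohomology of $G$. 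The isomorphism is canonical, being the comparison isomorphism of the flasque resolution composed with the basis-free identification above, and it is compatible with digraph morphisms through the induced pushforward on paths / pullback on forms, which gives the asserted naturality.

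As in Section~\ref{clique}, essentially all of the work is hidden in the Poincar\'e lemma, Lemma~\ref{Poincare}, and this is the step I expect to be the main obstacle. Because $U_P$ is the smallest open set containing $P$, the stalk of $\calC^\bullet$ at $P$ is $\calC^\bullet(U_P)$, so exactness of $\calC^\bullet$ amounts to showing that this local complex --- built from the basis elements living in $U_P$, which one checks is the relevant localization of the path complex of $G_P$ --- is acyclic in positive degrees with $H^0=\bZ$; the latter follows from the connectedness of $G_P$, a consequence of Lemma~\ref{SE} (unique start and end vertices), while the former is the statement $H^i(G_P)=0$ for $i>0$. This vanishing is the subtle combinatorial input, and, as the excerpt indicates, it is extracted from Section~\ref{CW}: the part of the inductive construction showing that $\partial P$ bounds a genuine $(k-1)$-sphere for a minimal $k$-path $P$ is precisely the geometric form of the local Poincar\'e lemma. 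Granting Lemma~\ref{Poincare}, nothing beyond standard homological algebra remains.
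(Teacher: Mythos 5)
Your proposal is correct and takes essentially the same route as the paper: the theorem is deduced by combining the flasque resolution of Lemma \ref{Poincare} with the identification of the global sections of $\calC^\bullet$ with the path cochain complex $\Omega^\bullet(G)$, exactly as in the clique case of Theorem \ref{iso1}. Like the paper, you correctly locate all the substantive work in the Poincar\'e lemma, which is extracted from the CW-complex construction of Section \ref{CW} (the fact that $\partial P$ bounds a $(k-1)$-sphere for a minimal $k$-path $P$).
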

\quash{
As the above proof shows, to a digraph with a given choice of the set $X_G$, one associates a CW complex, whose singluar cohomology is naturally isomorphic to the path cohomology of $G$, as well as the cohomology of the constant sheaf on $X_G$. Furthermore, it is more or less evident that this functor from $G, X_G$ to CW complex preserves products, therefore one has the Kunneth formula for path cohomology of $G$, as proved in \cite{GLMY}.
}

\section{Computation: a first discussion}
In this section, we are concerned with the computation of the CW complex and the cohomology, and try to get some first understanding of the complexity. We have the following:

\begin{thm}\label{complexity}
For digraphs with a uniform bound on the vertex degree, if one fixes $k$, then the time complexity of computing a basis of $\Omega_k$ consisting of minimal paths, and thus the $k$ skeleton of the CW complex, is quadratic.
\end{thm}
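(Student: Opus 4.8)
\emph{Proof proposal.} The plan is to exploit that a uniform out/in-degree bound $D$ together with a fixed length $k$ makes every relevant object \emph{local}: it lives inside a directed ball of radius $k$, which has at most $1+D+\cdots +D^{k}$ vertices, a constant. Write $n=|G|$ (and assume $k<n$, else $\Omega_k=0$). First I would record two structural facts. (i) By Lemma~\ref{SE} a minimal $k$-path $P$ has a single starting vertex $S$ (and a single ending vertex); each primitive constituent of $P$ is a length-$k$ path issuing from $S$, so all its vertices lie within distance $k$ of $S$, and hence $\mathrm{Supp}(P)$ sits inside the induced subgraph $B_k(S)$ on the radius-$k$ ball of $S$, which has $O(1)$ vertices. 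Moreover the minimality test for $P$ only reweights the primitive paths already occurring in $P$, so it never leaves $\mathrm{Supp}(P)$; and since $B_k(S)$ is an \emph{induced} subgraph, a $(k-1)$-sequence supported there is allowed in $G$ iff it is allowed in $B_k(S)$. Therefore ``$P$ is a minimal element of $\Omega_k(G)$'' is equivalent to ``$P$ is a minimal element of $\Omega_k(B_k(S))$''. (ii) By Lemma~\ref{min} the coefficients of a minimal path are all $\pm 1$, so there are at most $3^{D^{k}}=O(1)$ minimal $k$-paths with a given starting vertex, hence $O(n)$ minimal $k$-paths in total, and every integer arising below has $O(1)$ size.

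Now the algorithm. \textbf{Step 1:} enumerate all primitive paths of length $\le k$ by a depth-$k$ search from each vertex; with degree $\le D$ this costs $O(D^{k})$ per vertex, so $O(n)$ overall, and $\mathrm{rk}_{\bZ}A_j=O(n)$ for each $j\le k$. \textbf{Step 2:} for each vertex $S$ build the induced subgraph $B_k(S)$ in $O(1)$ time and, inside it, compute by brute force over the $\{-1,0,1\}$-combinations of the $O(1)$ primitive $k$-paths issuing from $S$ the list of minimal $k$-paths with starting vertex $S$; by (i) these are exactly the minimal $k$-paths of $G$ starting at $S$. This is $O(1)$ per vertex, hence $O(n)$, and yields the full list $M_k$ of minimal $k$-paths (disjoint over starting vertices by Lemma~\ref{SE}), with $|M_k|=O(n)$. \textbf{Step 3:} extract a basis of $\Omega_k$ from $M_k$. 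By Lemma~\ref{1} and Corollary~\ref{basis}, any maximal $\bQ$-linearly independent subset of $M_k$ is automatically an integral basis, so a greedy sweep works: maintain the chosen elements in row-echelon form over the $O(n)$ primitive $k$-paths, and for each $P\in M_k$ reduce $P$ against the current echelon form (crudely $O(n)$ per reduction) and keep it iff the result is nonzero. This costs $O(|M_k|\cdot n)=O(n^{2})$. \textbf{Step 4 (the $k$-skeleton):} run Steps~1--3 for every $j\le k$ — a constant number of passes, still $O(n^{2})$ — then assemble the complex: each chosen basis $j$-path $P$ contributes one $j$-cell glued along $\partial P$, and $\partial P$ is decomposed into the chosen $(j-1)$-basis by the same local bookkeeping, since by Lemma~\ref{1} and the locality of minimal relations only the $O(1)$ basis $(j-1)$-paths supported near $\mathrm{Supp}(P)$ can appear; this is $O(1)$ per cell and $O(n)$ in total. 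Hence the whole computation is $O(n^{2})$, dominated by Step~3.

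The main obstacle is the locality scaffolding that everything rests on: one must verify carefully that $\Omega_k(G)$-membership and minimality of a locally supported path are already detected inside the bounded subgraph $B_k(S)$ — this is exactly where Lemma~\ref{SE}, the ``reweighting stays inside the support'' observation, and the use of an induced ball are needed — and that there really are only $O(n)$ minimal $k$-paths, which uses both the degree bound and the $\pm 1$-coefficient statement of Lemma~\ref{min} to rule out arbitrarily many minimal elements per starting vertex. A secondary point: the crude ``$O(n)$ per reduction'' count in Step~3 ignores that the obstruction map $\partial\colon A_k\to A_{k-1}$ couples primitive paths only when they share a starting vertex, so $\Omega_k$ in fact splits as a direct sum of $O(1)$-rank pieces indexed by starting vertex; honouring that splitting makes each reduction touch only $O(1)$ rows and sharpens the estimate further, but since the theorem only asks for a quadratic bound I would present the cruder and cleaner argument above.
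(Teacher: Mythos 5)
Your argument is correct and leans on the same three ingredients as the paper's proof (Lemma~\ref{SE} for unique endpoints, the degree bound making the set of relevant primitive paths constant-sized, and Lemma~\ref{min}/Lemma~\ref{1} to reduce the local problem to a finite $\pm 1$ search and to upgrade a rational basis of minimal paths to an integral one), but the bookkeeping is genuinely different. The paper decomposes by ordered pairs $(S,E)$ of starting and ending vertexes: there are at most $n(n-1)$ such pairs, each carrying at most $D^{k-1}$ primitive $k$-paths, so each sector is a constant-size computation and the quadratic bound comes purely from the number of sectors; moreover, since minimal paths in distinct $(S,E)$-sectors involve disjoint sets of primitive paths, the per-sector bases simply concatenate into a basis of $\Omega_k$, so no global linear algebra is needed at all. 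You instead decompose by starting vertex only, make the locality explicit via the induced radius-$k$ ball $B_k(S)$ (a point the paper leaves implicit), and place the quadratic cost in a global echelon-form extraction of a maximal independent subset of the $O(n)$ minimal paths --- a step whose crude $O(n)$-per-reduction estimate is only safe because, as you note, supports are disjoint across starting vertexes, which is exactly the observation that lets the paper skip this step entirely. What your route buys is the explicit observation that the true complexity is essentially linear once the direct-sum splitting is honoured, i.e.\ the theorem's quadratic bound is not tight under your accounting; what the paper's route buys is that the quadratic factor is structural (the number of $(S,E)$ pairs) and the basis assembly is immediate, with no need to argue about reduction costs or about detecting $\Omega_k$-membership inside an induced subgraph.
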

\begin{proof}
Let $D$ denote the uniform bound of vertex degree, and $n$ be the number of vertexes of the digraph. By lemma \ref{SE}, any minimal path has unique starting and ending vertexes. There are at most $n(n-1)$ choices of these ordered pairs of vertexes. For each such choice, there are at most $D^{k-1}$ many primitive paths of length $k$ with the given starting and ending vertexes, and once all these primitive paths are enumerated, one is left with another finite calculation to determine a rational basis of minimal paths with given starting and ending vertexes. (think of lemma \ref{min}) These basis elements combine to give a desired basis of $\Omega_k$ consisting of minimal paths.
\end{proof}

\begin{rmk}
The proof that the homotopy type of the CW complex is determined by the digraph, obviously also shows that the same is true for any $k$ skeleton.
\end{rmk}

In the following, we present a recursive scheme for computing  a basis of $\Omega_k$ consisting of minimal paths.

Take any minimal path $P$ of length $k$, and with starting vertex S and ending vertex E. As before, we single out a subset $E_1$ of the set of vertexes in the support of $P$, consisting of vertexes such that any path in the support of $P$ connecting the vertex to $E$ is of length 1. It is then clear that, for any vertex $W_1$ in $E_1$, if one groups together all primitive paths in $P$ going through $W_1$ taking signs into account, and truncates $E$ from them, then one gets a path $P'$ of length $k-1$, and furthermore $P'$ is $\partial$-invariant: the proof of this is essentially the same as that of lemma \ref{SE}. So $P'$ can be written uniquely as a sum of basis elements of length $k-1$ that has already been computed, again with all the coefficients being either 1 or $-1$, and furthermore the union of these $(k-1)$-cells corresponding to the basis elements that show up, is homeomorphic to a $(k-1)$-disk. (see previous arguments in constructing the CW complex) For $P'$, one again defines a set $E_2$ to be the subset of vertexes in the support of $P'$, such that any path connecting the vetex to $W_1$ is of length 1. One then sees that in order for $P$ to be $\partial$-invariant, it is necessary and sufficient that, for any vertex $W_2$ in $E_2$ that is not connected to $E$ by a directed edge, and any primitive path $P''$ in $P'$ that goes through $W_2$, there exists another vertex $W$ in $E_1$ such that, once one expresses the same truncation of (signed) summation of all primitive paths of $P$ that goes through $W$ in terms of the unique linear combination of $k-1$ basis elements chosen, there exists one basis element in the linear combination, that contains a primitive path given by swithcing the ending vertex of $P''$ from $W_1$ to $W$, with appropriate sign, so that boundary components of these two primitive paths given by deleting $W_1$ and $W$ cancel as desired. In this way, one finds all $\partial$-invariant paths of length $k$ between S and E, then one goes on to find the minimal ones, and a rational thus integral basis, for which efficient and straightforward algorithms exist.

\begin{rmk}
It is clear that, the above recursive scheme will be more efficient than a basic brute force algorithm following from the proof of theorem \ref{complexity}. It is a problem to carefully study the complexity of such an algorithm in more general situations.
\end{rmk}

\end{document}